\newtheorem{theorem}{Theorem}[section]
\newtheorem{lemma}[theorem]{Lemma}
\newtheorem{corollary}[theorem]{Corollary}
\theoremstyle{definition}
\newtheorem{definition}[theorem]{Definition}
\newtheorem{example}[theorem]{Example}
\newtheorem{notation}[theorem]{Notation}
\theoremstyle{remark}
\newtheorem{remark}[theorem]{Remark}
\numberwithin{equation}{section}
\newcommand{\PP}{\mathbb{P}}
\newcommand{\NN}{\mathbb{N}}
\newcommand{\QQ}{\mathbb{Q}}
\newcommand{\Ii}{\mathcal{I}}
\newcommand{\Oo}{\mathcal{O}}
\newcommand{\adim}{\textnormal{adim}}
\newcommand{\vdim}{\textnormal{vdim}}
\newcommand{\Tr}{\textnormal{Tr}}
\newcommand{\Res}{\textnormal{Res}}
\newcommand{\HP}{\textnormal{HP}}
\newcommand{\HF}{\textnormal{HF}}
\newcommand{\bbinom}[2]{\genfrac{(}{)}{0pt}{1}{#1}{#2}}
\begin{document}

\author{Natalia Kupiec}
\address{Faculty of Mathematics and Computer Science, Jagiellonian University,
\L{}ojasiewicza 6, PL-30-348 Krak\'ow, Poland}
\email{natalia.kupiec@doctoral.uj.edu.pl}
\title[Actual and virtual dimension]{Actual and virtual dimension of codimension 2\\
general linear subspaces in $\mathbb{P}^n$}
%\date{\today}
\subjclass[2010]{14N20, 14C20, 14E07, 14N05}
\keywords{unexpected varieties, special linear systems, line arrangements, fat flats}%

\begin{abstract}
In the paper we compute the virtual dimension (defined by the Hilbert polynomial) of a space of hypersurfaces of given degree containing $s$ codimension 2 general linear subspaces in $\mathbb{P}^n$. We use Veneroni maps to find a family of unexpected hypersurfaces (in the style of B. Harbourne, J. Migliore, U. Nagel, Z. Teitler) and rigorously prove and extend examples presented in the paper by B. Harbourne, J. Migliore and H. Tutaj-Gasi\'nska.
\end{abstract}

\maketitle
\section{Introduction}
While examining hypersurfaces containing general linear subspaces in $\PP^n$ (possibly with multiplicities), the important question is how many such hypersurfaces exist. One can ask what is the dimension of the vector space of forms of given degree vanishing along those subspaces. We call it the \emph{actual dimension}. On the other hand, we can try to find the number of conditions that are imposed on such forms. This leads to the notion of the \emph{virtual dimension}, which is defined as the dimension of the vector spaces of all forms of degree $t$ minus the Hilbert polynomial of a scheme defined by vanishing along given subspaces. The interesting issue is whether those two values are equal.\par\medskip

The reason to focus on linear subspaces of codimension 2 is that these are being widely examined in low dimensions. In $\PP^2$ such subspaces are points. The virtual dimension of linear systems of forms of degree $t$ vanishing at $s$ general points with multiplicities $m_1,\ldots,m_s$ is equal
$$\binom{t+2}{2}-\sum_{i=1}^s\binom{m_i+2}{2}.$$
However, classification of systems for which the virtual and actual dimension are equal is the subject of SHGH (Segre-Harbourne-Gimigliano-Hirschowitz) Conjecture (see \cite{SHGH} for a survey on this topic).
\par\medskip

The next step is to consider lines in $\PP^3$. In this case it is still easy to find the virtual dimension of linear system of forms of degree $t$ vanishing along $s$ general lines with multiplicities $m_1,\ldots,m_s$. As presented in \cite{Linear} it is 
$$\binom{t+3}{3}-\frac{1}{6}\sum_{i=1}^sm_i(m_i+1)(3d+5-2m_i).$$
When all multiplicities are equal 1, the actual dimension is equal to the virtual dimension (as long as the virtual dimension is non-negative). This fact is known as Hartshorne-Hirschowitz Theorem \cite{HH}. For higher multiplicities there are examples when this is no longer true (cf. \cite{Unexp}).\par\medskip

When we study planes in $\PP^4$, it is more problematic to find the virtual dimension, since every two general planes intersect at a point. The intuitive way to take it into account in calculations is to use the inclusion-exclusion principle. Hence, for $s$ planes with multiplicities 1, the virtual dimension should be
$$\binom{t+4}{4}-s\binom{t+2}{2}+\binom{s}{2}.$$
It is also much harder to consider planes with multiplicities. For example, in \cite{Linear} authors avoid this problem by taking linear subspaces in $\PP^n$ with such codimensions that they do not intersect. Moreover, comparing the actual and virtual dimension led to the definition of unexpected hypersurfaces. As introduced in \cite{CHMN} and \cite{HMNT}, a scheme \emph{admits an unexpected hypersurface} of given degree, when the actual dimension is strictly greater than the virtual one.\par\medskip

One purpose of this paper is to prove the formula for the virtual dimension of the system of general linear subspaces of codimension 2 with multiplicities 1 in $\PP^n$. As in the case of $\PP^4$, it is given by inclusion-exclusion principle. Actually, we need to consider all the intersections of given subspaces to properly find the Hilbert polynomial. We present all the details in Section 3.\par\medskip

In Section 4 we introduce the Veneroni maps. They are generalizations of Cremona transformations in $\PP^2$ and cubo-cubic transformation in $\PP^3$. These birational maps of $\PP^n$ allow us to transform linear systems of forms of degree $n+k$ ($k\geq 3$) vanishing along $n+1$ codimension 2 linear subspaces with multiplicities 1 to forms of degree $kn+1$ vanishing with multiplicities $k$ along such subspaces. In \cite{Unexp} and \cite{New} there are presented examples of using those transformations that result in the examples of unexpected hypersurfaces. Another purpose of this paper is to generalize these results to higher dimensional projective spaces. We obtain a family of examples in Theorem \ref{unexp}. What is more, the analysis of mentioned systems for $k\geq 4$ lead us to the situation, when the virtual dimension is greater than the actual dimension (as in Example \ref{example21}). We say that a scheme \emph{misses an expected hypersurface} in this case. Such a situation could not appear in $\PP^2$ and $\PP^3$, because considered subspaces are not intersecting.\par\medskip

In Section 5 we gather all the results about the systems with multiplicities 1. In Theorem \ref{mult1aqv} we present conditions that guarantee that the actual dimension is greater than the virtual dimension and show in Example \ref{example4} that in $\PP^4$ without these conditions it may not be true. Later on, we compare dimensions in case of $n+1$ subspaces with a view towards using the Veneroni maps. It is summarized in Corollary \ref{mult1eq}. Section 6 focuses on the systems resulting from the transformation by the Veneroni maps. We prove Theorem \ref{mult3}, which is needed to show that there exists a family of unexpected hypersurfaces. In the last section we present the results regarding unexpected hypersurfaces including the examples of missing expected hypersurfaces.

\section{Basic definitions and auxiliary results}
Let $K$ be an algebraically closed field of characteristic 0 and $R=K[\PP^n]=K[x_0,\ldots,x_n]$ be the homogeneous coordinate ring of $\PP^n$. We consider $s$ distinct linear subspaces $\Lambda_1,\ldots,\Lambda_s\subset\PP^n$ of codimension 2 and denote by $X=m_1\Lambda_1+\ldots+m_s\Lambda_s$ the scheme defined by the ideal $$I_{X}=I(\Lambda_1)^{m_1}\cap\ldots\cap I(\Lambda_s)^{m_s}\subseteq R$$ generated by homogeneous polynomials vanishing on each $\Lambda_i$ to order at least $m_i$. We say that $m_i$ are the multiplicities of $X$.

\begin{definition}
The \emph{Hilbert function} of $X$ is the function $$\HF_X :\NN\ni t\mapsto \dim_K [R/I_X]_t\in\NN,$$ where subscript $[\ ]_t$ denotes degree $t$ part of graded homogeneous ring.
\end{definition}

It is well-known that there exists a polynomial $\HP_X\in\QQ[t]$ (called the \emph{Hilbert polynomial} of $X$) such that $\HP_X(t)=\HF_X(t)$ for all $t$ sufficiently large (see e.g. \cite{Eis}, \cite{Har}). To examine $\HP_X$ and $\HF_X$ for $X$ given by the vanishing along linear subspaces we need the following lemma that is proved in \cite{Linear}.
\begin{lemma}\label{cond}
Let $\Lambda\subset\PP^n$ be a linear subspace of codimension $k$. Let $t\geq m$ be a positive integer. Then vanishing to order at least $m$ along $\Lambda$ imposes 
$$c_{n,k,m,t}=\sum_{i=0}^{m-1}\binom{i+k-1}{k-1}\binom{t-i+n-k}{n-k}$$
linearly independent conditions on forms of degree $t$. Hence, for $m=1$ that gives $c_{n,k,1,t}=\bbinom{t+n-k}{n-k}$ conditions.
\end{lemma}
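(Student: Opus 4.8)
The plan is to reduce the statement to an explicit monomial count in a convenient coordinate system. Since $\Lambda$ is a linear subspace of codimension $k$, after a linear change of coordinates we may assume $\Lambda=V(x_0,\ldots,x_{k-1})$, so that $I(\Lambda)=(x_0,\ldots,x_{k-1})$ and $I(\Lambda)^m=(x_0,\ldots,x_{k-1})^m$. A form of degree $t$ vanishes to order at least $m$ along $\Lambda$ precisely when it lies in $I(\Lambda)^m$; hence the space of conditions imposed on $R_t$ is the annihilator of $[I(\Lambda)^m]_t$ in the dual $R_t^{\ast}$, and the number of \emph{linearly independent} conditions is $\operatorname{codim}_{R_t}[I(\Lambda)^m]_t=\dim_K[R/I(\Lambda)^m]_t$. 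So it suffices to compute this last dimension.

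First I would write down a monomial basis of $[R/(x_0,\ldots,x_{k-1})^m]_t$. A monomial $x_0^{a_0}\cdots x_{k-1}^{a_{k-1}}x_k^{b_k}\cdots x_n^{b_n}$ lies in $(x_0,\ldots,x_{k-1})^m$ if and only if $a_0+\cdots+a_{k-1}\geq m$, so the residues of the monomials with $a_0+\cdots+a_{k-1}=i$ for some $0\leq i\leq m-1$ form a $K$-basis of the quotient in degree $t$. Fixing $i$, such a monomial factors uniquely as a monomial of degree $i$ in the $k$ variables $x_0,\ldots,x_{k-1}$ — there are $\binom{i+k-1}{k-1}$ of these — times a monomial of degree $t-i$ in the $n-k+1$ variables $x_k,\ldots,x_n$ — there are $\binom{t-i+n-k}{n-k}$ of these, where $t\geq m>i$ guarantees $t-i\geq 0$. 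Multiplying and summing over $i$ gives $\dim_K[R/I(\Lambda)^m]_t=\sum_{i=0}^{m-1}\binom{i+k-1}{k-1}\binom{t-i+n-k}{n-k}=c_{n,k,m,t}$, and for $m=1$ only the $i=0$ term survives, yielding $\binom{t+n-k}{n-k}$.

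The one point deserving attention is the identification of ``vanishing to order $\geq m$'' with membership in $I(\Lambda)^m$ together with the genuine independence of the conditions; both are transparent in the chosen coordinates, since a form lies in $(x_0,\ldots,x_{k-1})^m$ exactly when every monomial in it has combined degree $\geq m$ in $x_0,\ldots,x_{k-1}$, which is precisely the vanishing of all partial derivatives of order $<m$ along $\Lambda$, and since $[I(\Lambda)^m]_t$ is spanned by monomials the complementary monomials display its codimension directly. Alternatively, one could run the telescoping short exact sequences $0\to[I(\Lambda)^j/I(\Lambda)^{j+1}]_t\to[R/I(\Lambda)^{j+1}]_t\to[R/I(\Lambda)^j]_t\to 0$ and use that $I(\Lambda)^j/I(\Lambda)^{j+1}$ is a free $K[x_k,\ldots,x_n]$-module of rank $\binom{j+k-1}{k-1}$ generated in degree $j$, which reproduces the same sum. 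I expect no real obstacle here beyond keeping track of the two binomial coefficients and the range of $i$; the computation itself is routine.
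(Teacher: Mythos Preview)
Your proof is correct. The paper does not actually supply its own proof of this lemma but cites \cite{Linear}; your direct monomial count in coordinates where $I(\Lambda)=(x_0,\ldots,x_{k-1})$, stratifying by the combined degree $i=a_0+\cdots+a_{k-1}$, is the standard argument and almost certainly coincides with the one given there.
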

\noindent Obviously $\HF_{m\Lambda}(t)=c_{n,k,m,t}$ for $t\geq m$ and $\HP_{m\Lambda}(t)=c_{n,k,m,t}$ for any $t>0$.\par\medskip

We also introduce the following numbers associated to $X=m_1\Lambda_1+\ldots+m_s\Lambda_s\subseteq\PP^n$.
\begin{definition}\ \\
The \emph{actual dimension} of $X$ is the dimension of the vector space of forms in $I_X$ of degree $t$:
$$\adim_n(X,t)=\dim_K[I_X]_t=\dim_K[R]_t-\HF_X(t).$$
The \emph{virtual dimension} is defined as:
$$\vdim_n(X,t)=\dim_K[R]_t-\HP_X(t).$$
We say that $X$ \emph{admits an unexpected hypersurface} of degree $t$, if $ \adim_n(X,t)>0 $ and $$ \adim_n(X,t)>\vdim_n(X,t)$$ and $X$ \emph{misses an expected hypersurface} of degree $t$, when $$ \vdim_n(X,t)> \adim_n(X,t)>0 .$$
\end{definition}

\begin{remark}Usually along with the definition of  actual and virtual dimensions authors introduce the \emph{expected dimension}, which is the maximum of the virtual dimension and 0 (as in \cite{CHMN}, \cite{HMNT} and \cite{New}). Example \ref{example21} shows that the behaviour of the virtual dimension may be intricate and in many cases we cannot actually expect it to be equal to the actual dimension. Moreover, the situation presented in Example \ref{example21} allows us to introduce a new concept of missing expected hypersurfaces.
\end{remark}

We present some facts that are helpful in studying the actual and virtual dimesion. First, we need to introduce the residual and trace schemes.
\begin{definition}
Let $X,Y$ be closed subschemes of $\PP^n$ and $\Ii_X$, $\Ii_Y$ denote corresponding ideal sheafs.
The \emph{trace} of $X$ with respect to $Y$ is the scheme $\Tr_Y(X)$ defined by the ideal sheaf $(\Ii_X+\Ii_Y)/\Ii_Y$ (i.e. the schematic intersection of $X$ and $Y$ in $Y$).
The \emph{residual} of $X$ with respect to $Y$ is the scheme $\Res_Y(X)$ defined by the colon ideal $(\Ii_X:\Ii_Y)$.
\end{definition}

Let $X\subseteq\PP^n$ be a closed subscheme and $H\subseteq\PP^n$ be a hyperplane. We have the following exact sequence:
%$$0\to\Ii_{\Res_H(X)}(t-1)\to\Ii_X(t)\to\Ii_{\Tr_H(X)}(t)\to0.$$
$$0\to\Oo_{\PP^n}(t-1)\otimes\Ii_{\Res_H(X)}\to\Oo_{\PP^n}(t)\otimes\Ii_X\to\Oo_{H}(t)\otimes\Ii_{\Tr_H(X)}\to0.$$
We know that $\dim_K([I]_t)=h^0(\PP^n,\Oo_{\PP^n}(t)\otimes\Ii_X)$. So from the long exact sequence of cohomology we have the following inequality, which is called the Castelnuovo Inequality (more details can be found in \cite{AH} and \cite{AH2}).
\begin{lemma}[Castelnuovo Inequality]\label{castineq} For a closed subscheme $X\subseteq\PP^n$ and a hyperplane $H\subseteq\PP^n$ we have
$$\dim_K ([I_{X}]_t)\leq \dim_K ([I_{\Res_H(X)}]_{t-1})+\dim_K ([I_{\Tr_H(X),H}]_{t}),$$
where $I_{\Tr_H(X),H}$ denotes the ideal of forms in $H$ vanishing on $\Tr_H(X)$. Moreover, since $H\cong\PP^{n-1}$, we have 
$$\adim_n (X,t)\leq \adim_n (\Res_H(X),t-1)+\adim_{n-1} (\Tr_H(X),t).$$
\end{lemma}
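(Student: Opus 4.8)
The plan is to extract the inequality directly from the long exact cohomology sequence attached to the short exact sequence of sheaves displayed immediately before the statement, using nothing more than left-exactness of global sections. First I would recall the elementary fact that for any short exact sequence of coherent sheaves $0\to\mathcal{F}'\to\mathcal{F}\to\mathcal{F}''\to0$ on a projective scheme, the induced sequence $0\to H^0(\mathcal{F}')\to H^0(\mathcal{F})\to H^0(\mathcal{F}'')$ is exact, so that $h^0(\mathcal{F})\leq h^0(\mathcal{F}')+h^0(\mathcal{F}'')$ (with equality precisely when the connecting map $H^0(\mathcal{F}'')\to H^1(\mathcal{F}')$ vanishes). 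Applying this to
$$0\to\Oo_{\PP^n}(t-1)\otimes\Ii_{\Res_H(X)}\to\Oo_{\PP^n}(t)\otimes\Ii_X\to\Oo_{H}(t)\otimes\Ii_{\Tr_H(X)}\to0$$
yields
$$h^0(\Oo_{\PP^n}(t)\otimes\Ii_X)\leq h^0(\Oo_{\PP^n}(t-1)\otimes\Ii_{\Res_H(X)})+h^0(\Oo_{H}(t)\otimes\Ii_{\Tr_H(X)}).$$

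The second step is to translate each of the three cohomology groups into the dimension of a graded piece of a homogeneous ideal. For a closed subscheme $Z\subseteq\PP^n$ one has $h^0(\PP^n,\Oo_{\PP^n}(t)\otimes\Ii_Z)=\dim_K[I_Z]_t$, which is exactly the identity already invoked in the text; this disposes of the first two terms, giving $\dim_K([I_X]_t)$ and $\dim_K([I_{\Res_H(X)}]_{t-1})$ respectively. For the third term I would use that $H\cong\PP^{n-1}$ with $\Oo_{\PP^n}(t)|_H=\Oo_H(t)$, and that by definition $\Tr_H(X)$ is the schematic intersection $X\cap H$ regarded as a subscheme of $H$, so its twisted ideal sheaf has $h^0(H,\Oo_H(t)\otimes\Ii_{\Tr_H(X)})=\dim_K[I_{\Tr_H(X),H}]_t$, where $I_{\Tr_H(X),H}$ is the ideal of forms on $H$ vanishing on $\Tr_H(X)$. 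Substituting these three identifications into the displayed inequality produces
$$\dim_K([I_X]_t)\leq\dim_K([I_{\Res_H(X)}]_{t-1})+\dim_K([I_{\Tr_H(X),H}]_t).$$

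It then remains only to rewrite this in terms of actual dimensions. By definition $\adim_n(X,t)=\dim_K[I_X]_t$ and $\adim_n(\Res_H(X),t-1)=\dim_K[I_{\Res_H(X)}]_{t-1}$, and under the identification $H\cong\PP^{n-1}$ we have $\adim_{n-1}(\Tr_H(X),t)=\dim_K[I_{\Tr_H(X),H}]_t$, so the previous line becomes precisely
$$\adim_n(X,t)\leq\adim_n(\Res_H(X),t-1)+\adim_{n-1}(\Tr_H(X),t).$$
The cohomological heart of the argument is just the left-exactness of $H^0$, so I do not expect a real obstacle there; the one point requiring care is the bookkeeping for the third term, namely confirming that the scheme-theoretic restriction of $\Ii_X$ to $H$ really is the ideal sheaf of $\Tr_H(X)$ in $H$ and that its global sections are computed by forms on $H$ vanishing on $\Tr_H(X)$ with no interference from non-saturated ideals — but this is exactly the content of the defining exact sequence, which we are entitled to assume.
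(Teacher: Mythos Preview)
Your proposal is correct and follows exactly the argument the paper itself sketches in the paragraph immediately preceding the lemma: take the displayed short exact sequence, pass to the long exact cohomology sequence (equivalently, use left-exactness of $H^0$), and identify each $h^0$ with the dimension of the corresponding graded piece of the ideal. The paper's treatment is more terse (it simply says ``from the long exact sequence of cohomology we have the following inequality'' and cites \cite{AH}, \cite{AH2}), but your expanded version tracks the same route step by step.
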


If we take $t\gg0$, we have that $h^1(\PP^n,\Ii(t))=0$ by Serre Vanishing Theorem (cf. \cite{Hart}). Hence, we obtain the next result.
\begin{lemma}\label{casteq} For a closed subscheme $X\subseteq\PP^n$ and a hyperplane $H\subseteq\PP^n$ we have
$$\dim_K ([I_{X,\PP^n}]_t)= \dim_K ([I_{\Res_H(X),\PP^n}]_{t-1})+\dim_K ([I_{\Tr_H(X),H}]_{t})$$
for sufficiently large $t$. Moreover, since $H\cong\PP^{n-1}$, then 
$$\vdim_n (X,t)= \vdim_n (\Res_H(X),t-1)+\vdim_{n-1} (\Tr_H(X),t).$$
\end{lemma}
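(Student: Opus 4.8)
The plan is to obtain the first identity directly from the long exact sequence in cohomology attached to the short exact sheaf sequence
$$0\to\Oo_{\PP^n}(t-1)\otimes\Ii_{\Res_H(X)}\to\Oo_{\PP^n}(t)\otimes\Ii_X\to\Oo_{H}(t)\otimes\Ii_{\Tr_H(X)}\to0$$
recalled earlier, using Serre vanishing to kill the obstructing $H^1$, and then to upgrade it to the stated identity for virtual dimensions by a polynomial‑identity argument.

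First I would set $\mathcal{F}'=\Oo_{\PP^n}(t-1)\otimes\Ii_{\Res_H(X)}$, $\mathcal{F}=\Oo_{\PP^n}(t)\otimes\Ii_X$ and $\mathcal{G}=\Oo_{H}(t)\otimes\Ii_{\Tr_H(X)}$, so that the long exact cohomology sequence starts
$$0\to H^0(\PP^n,\mathcal{F}')\to H^0(\PP^n,\mathcal{F})\to H^0(H,\mathcal{G})\to H^1(\PP^n,\mathcal{F}')\to\cdots.$$
By the Serre Vanishing Theorem applied to the coherent sheaf $\Ii_{\Res_H(X)}$ on $\PP^n$ we have $H^1(\PP^n,\mathcal{F}')=H^1(\PP^n,\Ii_{\Res_H(X)}(t-1))=0$ for all sufficiently large $t$; for such $t$ the restriction map $H^0(\PP^n,\mathcal{F})\to H^0(H,\mathcal{G})$ is surjective and
$$0\to H^0(\PP^n,\mathcal{F}')\to H^0(\PP^n,\mathcal{F})\to H^0(H,\mathcal{G})\to 0$$
is exact. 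Taking dimensions and using $\dim_K[I_X]_t=h^0(\PP^n,\Oo_{\PP^n}(t)\otimes\Ii_X)$, the same identity for $\Res_H(X)$ in degree $t-1$, and the identification $H\cong\PP^{n-1}$ (under which $h^0(H,\mathcal{G})=\dim_K[I_{\Tr_H(X),H}]_t$), this gives the first displayed equality for all $t\gg0$.

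To pass to virtual dimensions, I would invoke that for any closed subscheme $Z$ of a projective space one has $\HF_Z(t)=\HP_Z(t)$ for $t$ large, whence $\dim_K[I_Z]_t$ equals the dimension of all degree‑$t$ forms in the ambient ring minus $\HP_Z(t)$, i.e.\ it equals the corresponding virtual dimension. Applying this to $X$ and $\Res_H(X)$ in $\PP^n$ and to $\Tr_H(X)$ in $H\cong\PP^{n-1}$, the equality obtained above becomes
$$\vdim_n(X,t)=\vdim_n(\Res_H(X),t-1)+\vdim_{n-1}(\Tr_H(X),t)$$
for all sufficiently large $t$. Since, by definition, each side is a polynomial in $t$ (a difference of the polynomial $t\mapsto\dim_K[R]_t$ and a possibly shifted Hilbert polynomial), an equality valid for infinitely many $t$ is an identity of polynomials, so it holds for every $t$.

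The argument is little more than bookkeeping around one exact sequence, so I do not anticipate a genuine obstacle. The two points that require a bit of attention are: ensuring the threshold ``sufficiently large'' can be chosen to work simultaneously for $X$, $\Res_H(X)$ and $\Tr_H(X)$ (there are only finitely many thresholds, so one may exceed all of them at once); and the closing polynomial‑identity step, which is what promotes the conclusion from the stable range of $t$ to all $t$, and is therefore performed last.
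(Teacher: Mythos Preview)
Your proposal is correct and follows exactly the route the paper sketches: the paper's proof consists of the single remark preceding the lemma that $h^1(\PP^n,\Ii(t))=0$ for $t\gg0$ by Serre vanishing, applied to the exact sequence already displayed before Lemma~\ref{castineq}. Your write-up merely fleshes out that sketch, and your closing polynomial-identity step (equality for infinitely many $t$ forces equality of the underlying polynomials) makes explicit something the paper leaves tacit but relies on, for instance in the proof of Theorem~\ref{thm1}.
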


\section{The virtual dimension of schemes with multiplicities 1}
In this section we prove the formula for the virtual dimension of $X=\Pi_1+\ldots+\Pi_s$ which is a subscheme of $\PP^n$ ($n\geq 2, s\geq 0$), where $\Pi_1,\ldots, \Pi_s$ are general linear subspaces of codimension 2. By Lemma \ref{cond}, one such subspace gives $c_{n,2,1,t}=\bbinom{t+n-2}{n-2}$ conditions. However, every $i\leq s$ subspaces intersect in a subspace of codimension $2i$ as long as $2i \leq n$. There are $\binom{s}{i}$ such intersections each giving $\bbinom{t+n-2i}{n-2i}$ conditions. By considering all possible intersections we find the formula for the expected number of conditions that vanishing on $X$ imposes on forms of degree $t$. We denote it by $$S_{n,s,t}=\sum_{i=0}^{N(n,s)}(-1)^i\binom{s}{i}\binom{t+n-2i}{n-2i},$$ where for given $n, s$ we take $N(n,s)=\min\{\lfloor n/2\rfloor, s\}$. Now, we present an important property of introduced formula that is proved in the Appendix.

\begin{lemma}\label{combi1}
The following formula holds $$S_{n,s,t}=S_{n,s-1,t-1}+S_{n-1,s-1,t-1}.$$
\end{lemma}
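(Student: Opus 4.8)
The plan is to verify the identity $S_{n,s,t}=S_{n,s-1,t-1}+S_{n-1,s-1,t-1}$ by manipulating the defining alternating sums term by term, using two standard binomial identities: Pascal's rule $\binom{a}{b}=\binom{a-1}{b}+\binom{a-1}{b-1}$ applied in the $s$-variable, namely $\binom{s}{i}=\binom{s-1}{i}+\binom{s-1}{i-1}$, and the "hockey-stick"/Pascal relation in the $t$-variable, namely $\binom{t+n-2i}{n-2i}=\binom{t-1+n-2i}{n-2i}+\binom{t-1+n-2i}{n-1-2i}$. The first observation is that $\binom{t-1+n-2i}{n-1-2i}=\binom{(t-1)+(n-1)-2i}{(n-1)-2i}$, which is exactly the term appearing in $S_{n-1,s-1,t-1}$ at index $i$ up to the change of binomial coefficient $\binom{s}{i}\rightsquigarrow\binom{s-1}{i}$. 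So the strategy writes each summand of $S_{n,s,t}$ as a sum of a "dimension-preserving" piece (feeding $S_{n,s-1,t-1}$) and a "dimension-dropping" piece (feeding $S_{n-1,s-1,t-1}$), and then reindexes.

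Concretely, I would first expand $\binom{s}{i}=\binom{s-1}{i}+\binom{s-1}{i-1}$ in $S_{n,s,t}$ to split it into two sums; the sum with $\binom{s-1}{i-1}$ is reindexed by $j=i-1$, which converts $(-1)^i$ into $-(-1)^j$ and $\binom{t+n-2i}{n-2i}$ into $\binom{t+n-2-2j}{n-2-2j}$, turning that piece into (a sign-adjusted version of) $S_{n-2,s-1,t-2}$-like terms — this suggests the cleaner route is actually to apply the $t$-Pascal split first. So instead: apply $\binom{t+n-2i}{n-2i}=\binom{t-1+n-2i}{n-2i}+\binom{t-1+(n-1)-2i}{(n-1)-2i}$ to each term of $S_{n,s,t}$, giving $S_{n,s,t}=\sum_i(-1)^i\binom{s}{i}\binom{t-1+n-2i}{n-2i}+\sum_i(-1)^i\binom{s}{i}\binom{t-1+(n-1)-2i}{(n-1)-2i}$. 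In the first sum apply $\binom{s}{i}=\binom{s-1}{i}+\binom{s-1}{i-1}$ and reindex the $\binom{s-1}{i-1}$ part by $j=i-1$; the resulting $\binom{t-1+n-2-2j}{n-2-2j}=\binom{(t-1)+(n-2)-2j}{(n-2)-2j}$ terms, with sign $-(-1)^j$ and coefficient $\binom{s-1}{j}$, should telescope against part of the second sum after its own $\binom{s}{i}=\binom{s-1}{i}+\binom{s-1}{i-1}$ split and reindexing. Collecting the $\binom{s-1}{i}\binom{t-1+n-2i}{n-2i}$-terms yields $S_{n,s-1,t-1}$ and the $\binom{s-1}{i}\binom{t-1+(n-1)-2i}{(n-1)-2i}$-terms yield $S_{n-1,s-1,t-1}$, provided the leftover (reindexed) pieces cancel.

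The main obstacle, and the only genuinely delicate point, is bookkeeping the summation ranges: $S_{n,s,t}$ runs to $N(n,s)=\min\{\lfloor n/2\rfloor,s\}$, and the three cut-offs $N(n,s)$, $N(n,s-1)$, $N(n-1,s-1)$ need not coincide — e.g. $\lfloor (n-1)/2\rfloor$ differs from $\lfloor n/2\rfloor$ when $n$ is even, and capping at $s$ versus $s-1$ matters at the top. The clean way to handle this is to note that every binomial coefficient appearing is a genuine polynomial coefficient $\binom{m+d}{d}$ with the convention that it vanishes when $d<0$, and likewise $\binom{s}{i}=0$ for $i>s$; hence one may freely extend all the sums to a common large range (say $i$ from $0$ to $\lfloor n/2\rfloor$, or even to $\infty$) without changing any value, after which the term-by-term identity goes through with no case analysis. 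I would state this range-extension remark explicitly at the start of the proof. Everything else is the routine Pascal bookkeeping sketched above, so I expect the write-up to be short once the convention is fixed.
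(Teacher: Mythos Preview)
Your infinite-range idea is a genuine improvement on the paper's argument: the paper establishes the same telescoping identity you are aiming at, but then spends the bulk of its proof on a case analysis of the three upper limits $N(n,s)$, $N(n,s-1)$, $N(n-1,s-1)$, splitting according to the parity of $n$ and the relative size of $s$. With the convention $\binom{s}{i}=0$ for $i>s$ and $\binom{t+m}{m}=0$ for $m<0$, all three sums can be taken over $i\ge 0$ and that case analysis disappears entirely.

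That said, there is a concrete arithmetic slip that breaks your term-matching. Pascal's rule gives
\[
\binom{t+n-2i}{n-2i}=\binom{t+n-2i-1}{n-2i}+\binom{t+n-2i-1}{n-2i-1},
\]
and the second summand has upper index $t+n-2i-1=t+(n-1)-2i$, \emph{not} $(t-1)+(n-1)-2i$ as you write. Hence the ``dimension-dropping'' piece is the index-$i$ term of $S_{n-1,\,\cdot\,,t}$, not of $S_{n-1,s-1,t-1}$; your ``first observation'' equating the two is off by one in the top entry. Consequently a single Pascal split in $t$ together with a single split in $s$ does not produce the two desired sums plus cancelling leftovers, and the assertion ``provided the leftover (reindexed) pieces cancel'' fails as stated.

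The repair is one further Pascal application. In the paper's notation $a_i,b_i,c_i$ for the $i$-th summands of $S_{n,s,t}$, $S_{n,s-1,t-1}$, $S_{n-1,s-1,t-1}$, two uses of Pascal on the $t$-binomial and one on $\binom{s}{i}$ yield the termwise identity
\[
a_i-b_i-c_i=r_i-r_{i-1},\qquad r_i=(-1)^i\binom{s-1}{i}\binom{t+n-2i-2}{n-2i-2},\quad r_{-1}=0.
\]
Under your infinite-range convention $r_i=0$ for all large $i$, so summing over $i\ge 0$ telescopes to zero and the lemma follows. This is exactly the core computation in the paper; your contribution is the clean way to avoid its boundary cases.
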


We move on to the main theorem of this section.
\begin{theorem}\label{thm1} Let $\Pi_1,\ldots,\Pi_s\subseteq\PP^n$ be distinct general linear subspaces of codimension 2. Then for $X=\Pi_1+\ldots+\Pi_s$ we have $\vdim_n(X,t)=S_{n,s,t}.$ 
\end{theorem}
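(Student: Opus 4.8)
The plan is to prove the statement by double induction on $n$ and $s$, using Lemma~\ref{casteq} to peel off a hyperplane and matching the recursion there against the combinatorial recursion in Lemma~\ref{combi1}. The base cases are easy: for $s=0$ the scheme is empty, so $\vdim_n(\emptyset,t)=\binom{t+n}{n}=S_{n,0,t}$; and for $n\leq 1$ there are no codimension~$2$ subspaces to speak of (or the relevant intersections degenerate), so the formula reduces to the binomial $\binom{t+n}{n}$ directly from the definition of $S_{n,s,t}$ once one checks $N(n,s)=0$. One should also dispose of the case $n=2$ or small $t$ separately if needed, since there $\HP_X$ of a collection of points is simply $s$ for $t\geq 1$, matching $S_{2,s,t}=\binom{t+2}{2}-s$.

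For the inductive step I would choose a general hyperplane $H\subseteq\PP^n$ and compute $\Res_H(X)$ and $\Tr_H(X)$. Since the $\Pi_i$ are general of codimension~$2$ and $H$ is general, $H$ contains none of the $\Pi_i$, so the residual of each $\Pi_i$ with respect to $H$ is $\Pi_i$ itself, giving $\Res_H(X)=X$ as a scheme in $\PP^n$ — but this does not immediately drop $s$. The more productive move is the reverse: rather than slicing $X$, I would slice by a hyperplane $H$ that \emph{contains} one of the subspaces, say $\Pi_s\subseteq H$. Then $\Tr_H(X)$ is, inside $H\cong\PP^{n-1}$, the union of $\Pi_s$ (now codimension~$1$ in $H$) together with the codimension-$2$-in-$\PP^n$ traces $\Pi_i\cap H$ for $i<s$, which are general codimension-$2$ subspaces of $\PP^{n-1}$; and $\Res_H(X)$ is $\Pi_1+\dots+\Pi_{s-1}$ in $\PP^n$ (the multiplicity-$1$ component along $\Pi_s$ is removed by the colon operation, while the $\Pi_i$, $i<s$, are unaffected because $H$ does not contain them). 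Actually, to get the clean recursion $S_{n,s,t}=S_{n,s-1,t-1}+S_{n-1,s-1,t-1}$ one wants the trace to look like $s-1$ general codimension-$2$ subspaces of $\PP^{n-1}$ \emph{plus} the hyperplane section data accounted for — so the correct choice is a hyperplane $H\supseteq\Pi_s$, noting that forms on $H$ vanishing on $\Tr_H(X)$ automatically vanish on the whole hyperplane component only if we factor it out; the bookkeeping is that $[I_{\Tr_H(X),H}]_t$ is the degree-$t$ part of the ideal of $(\Pi_1\cap H)+\dots+(\Pi_{s-1}\cap H)$ in $\PP^{n-1}$ twisted down by one (because every such form is divisible by the linear form cutting out $\Pi_s$ in $H$), i.e. it contributes $\vdim_{n-1}(\Pi_1\cap H+\dots+\Pi_{s-1}\cap H,\,t-1)=S_{n-1,s-1,t-1}$.

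With that identification, Lemma~\ref{casteq} gives, for $t\gg0$,
\[
\vdim_n(X,t)=\vdim_n(\Res_H(X),t-1)+\vdim_{n-1}(\Tr_H(X),t)=S_{n,s-1,t-1}+S_{n-1,s-1,t-1},
\]
where the first summand uses the inductive hypothesis for $s-1$ in $\PP^n$ and the second the reinterpretation above together with the inductive hypothesis in $\PP^{n-1}$. By Lemma~\ref{combi1} the right-hand side equals $S_{n,s,t}$, which proves the equality $\HP_X(t)=\binom{t+n}{n}-S_{n,s,t}$ for all large $t$; since both $\HP_X$ and $S_{n,s,t}$ are polynomials in $t$, agreement for large $t$ forces agreement for all $t$, and hence $\vdim_n(X,t)=S_{n,s,t}$ identically.

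\textbf{Main obstacle.} The delicate point is the precise scheme-theoretic identification of $\Res_H(X)$ and $\Tr_H(X)$ when $H$ is chosen to contain $\Pi_s$: one must verify that the colon ideal $(I_X:I_H)$ is exactly $I(\Pi_1)\cap\dots\cap I(\Pi_{s-1})$ (no embedded or residual junk along $\Pi_s$, which uses that $\Pi_s$ appears with multiplicity~$1$ and that $H$ is otherwise general), and that the trace ideal, after dividing out the linear equation of $\Pi_s$ inside $H$, is the ideal of $s-1$ \emph{general} codimension-$2$ subspaces in $\PP^{n-1}$ — in particular that the genericity of the $\Pi_i$ is preserved under the generic hyperplane section and that no unexpected incidences are created. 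Establishing these two facts carefully, and confirming that $H$ can simultaneously be chosen general enough for the intersections $\Pi_i\cap H$ to be general while containing the fixed $\Pi_s$, is where the real work lies; everything after that is the formal induction plus the polynomial-identity argument.
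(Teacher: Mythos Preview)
Your proposal is correct and follows essentially the same route as the paper: choose a hyperplane $H\supseteq\Pi_s$, identify $\Res_H(X)=\Pi_1+\dots+\Pi_{s-1}$ and $\Tr_H(X)=\Pi_1|_H+\dots+\Pi_s|_H$, peel off the fixed component $\Pi_s$ inside $H$ to drop the trace degree by one, apply Lemma~\ref{casteq}, and match the resulting recursion against Lemma~\ref{combi1}. The only cosmetic difference is that the paper runs the induction on the single parameter $n+s$ rather than as a double induction on $n$ and $s$, and it does not linger on the scheme-theoretic identifications of residual and trace that you flag as the main obstacle.
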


\begin{proof} Notice that both $\vdim_n(X,t)$ and $S_{n,s,t}$ are polynomials with respect to $t$.\\
We will proceed by induction on the sum $n+s$.
We assume that $n+s=2$, so $n=2$ and $s=0$. Then $$\vdim_2(X,t)=\adim_2(X,t)=\bbinom{t+2}{2}=S_{2,0,t}.$$ If $n+s=3$, there are two possibilities. For $n=2$ and $s=1$, we have $$\vdim_2(X,t)=\adim_2(X,t)=\bbinom{t+2}{2}-1=S_{2,1,t}$$ and for $n=3$ and $s=0$, we get $$\vdim_3(X,t)=\adim_3(X,t)=\bbinom{t+3}{3}=S_{3,0,t}.$$
\indent Now we assume that $n+s=k$ and that polynomials $\vdim_n(X,t)$ and $S_{n,s,t}$ are equal for $n+s=k-1$ and $n+s=k-2$.\\
Choose $H\cong\PP^{n-1}\subseteq\PP^n$ in such a way that $\Pi_s\subseteq H$. Then $$\Res_H(X)=\Pi_1+\ldots+\Pi_{s-1}$$ and $$\Tr_H(X)=\Pi_1|_H+\ldots+\Pi_{s}|_H.$$ By Lemma \ref{casteq} we have
$$\vdim_n(X,t)=\vdim_n(\Pi_1+\ldots+\Pi_{s-1},t-1)+\vdim_{n-1}(\Pi_1|_H+\ldots+\Pi_{s}|_H,t).$$
Since $\Pi_s\subseteq H$ is a fixed component for the locus of forms in $H$, we have $$\vdim_{n-1}(\Pi_1|_H+\ldots+\Pi_{s}|_H,t)=\vdim_{n-1}(\Pi_1|_H+\ldots+\Pi_{s-1}|_H,t-1).$$ By induction and Lemma \ref{combi1} we obtain our result:
\begin{gather*}
\vdim_n(X,t)=\vdim_n(\Pi_1+\ldots+\Pi_{s-1},t-1)
+\vdim_{n-1}(\Pi_1|_H+\ldots+\Pi_{s-1}|_H,t-1)\\
=S_{n,s-1,t-1}+S_{n-1,s-1,t-1}=S_{n,s,t}.
\end{gather*}
\end{proof}

\section{Veneroni maps} Now we recall the construction of Veneroni maps. One can find more details in \cite{Ven}, where authors present basic facts about Veneroni maps with a modern approach. In \cite{Unexp} there is a thorough analysis of the case of a cubo-cubic transformation in $\PP^3$, which is a special case of the Veneroni map.\par\medskip

Let $\Pi_1,\ldots,\Pi_{n+1}\subseteq\PP^n$ be codimension 2 general linear subspaces. The linear system of all forms of degree $n$ vanishing on those subspaces has dimension $n+1$. Hence, it defines a rational map $v_n:\PP^n\dashrightarrow\PP^n$. This map is called the \emph{Veneroni map}. As it is proved in \cite{Ven}, its inverse is again the Veneroni map defined by the degree $n$ forms vanishing on some general linear subspaces $\Pi'_1,\ldots,\Pi'_{n+1}$ of codimension 2 and $v_n$ is in fact a birational map.\par\medskip

Following the notation from \cite{New} we write $dH-m_1\Pi_1-\ldots-m_{n+1}\Pi_{n+1}$ to denote the linear system of forms of degree $d$ that vanish on each $\Pi_i$ to order at least $m_i$. Let $H$ and $H'$ denote the linear system of all hyperplanes in $\PP^n$, where $H$ is considered in the source of the map $v_n$ and $H'$ in its target. Then $v_n$ pulls back $H'$ to $nH-\Pi_1-\ldots-\Pi_{n+1}$. By the construction of $\Pi_i'$ from \cite{Ven}, $v_n$ pulls back $H'$ to a unique hypersurface of degree $n-1$ containing $\Pi_1,\ldots,\Pi_{j-1},\Pi_{j+1},\ldots,\Pi_{n+1}$, which is denoted by $(n-1)H-\Pi_1-\ldots-\Pi_{n+1}+\Pi_j$.\par\medskip

To generalize the results from \cite{New} we examine the linear systems of the form $S=(n+k)H-\Pi_1-\ldots-\Pi_{n+1}$. Then 
\begin{multline*}
v_n^{*}(S)=S'=(n+k)H'-\Pi'_1-\ldots-\Pi'_{n+1}\\
=(n+k)(nH-\Pi_1-\ldots-\Pi_{n+1})-((n-1)H-\Pi_2-\ldots-\Pi_{n+1})\\
-\ldots -((n-1)H-\Pi_1-\ldots \Pi_n)
=(nk+1)H-k\Pi_1-\ldots-k\Pi_{n+1}.
\end{multline*}

\noindent Since $v_n$ is a birational map, we have $$\adim_n(\Pi_1+\ldots+\Pi_{n+1},n+k)=\adim_n(k\Pi_1+\ldots+k\Pi_{n+1},kn+1).$$
In the following sections we focus on those two types of linear systems. First, we will examine the actual dimension of $S$ (and therefore of $S'$). Then, we will analyse the virtual dimension of $S'$ for $k=3$. As a consequence, we will show that the systems $S'$ for $k=3$ form a family of examples of unexpected hypersurfaces.

\section{Linear systems with base loci in subspaces of codimension 2}
This section presents the analysis of the relation between the virtual and actual dimension of the linear systems of hypersurfaces containing general linear subspaces of codimension 2 with multiplicities 1. We prove Theorem \ref{mult1aqv} that provides sufficient conditions under which the actual dimension is greater than the virtual dimension. Afterwards, we show that that systems of hypersurfaces of degree $n+k$ vanishing along $n+1$ such subspaces in $\PP^n$ satisfy those conditions and that the virtual dimension is positive in such case. Then, we check that the opposite equality between dimensions also holds in this situation.

\begin{notation}It is convenient to introduce, for numbers $a,b,c$ the following notation:
\begin{small}
\begin{center}
\begin{tikzcd}
a \arrow[r,dashed] \arrow[d,dashed] & b\\
c
\end{tikzcd}
\end{center}
\end{small}
to denote that $a\leq b+c$ and 
\begin{small}
\begin{center}
\begin{tikzcd}
a \arrow[r] \arrow[d] & b\\
c
\end{tikzcd}\\
\end{center}
\end{small}
for $a=b+c$.
\end{notation}

The following example was presented in \cite[Ex. 4.2]{New}. Hereby, we show how to calculate the actual dimension of a given system without using computer methods.

\begin{example}\label{mult1example} 
Let $X=\Pi_1+\ldots+\Pi_5\subseteq\PP^4$. We want to show that $$\adim_4(X,7)=\vdim_4(X,7)=160.$$ By Theorem \ref{thm1} we have $\vdim_4(X,7)=S_{4,5,7}=\bbinom{7+4}{4}-5\bbinom{7+2}{2}+\bbinom{5}{2}=160$.\par
First, we prove that $\adim_4(X,7)\geq 160$. It is enough to show that we can properly construct the sets $W_i$ of linear equations given by the vanishing of forms of degree 7 along each $\Pi_i$. First, we construct the sets $W_{ij}$ of linearly independent equations given by the vanishing along intersections $\Pi_i\cap\Pi_j$. There are $\bbinom{5}{2}=10$ such intersections and those are points in this case, hence each gives one equation. Now each $W_i$ should contain the equations that are already in sets $W_{ij}$ for $j\neq i$, hence 4 equations, whereas one codimension 2 subspace in $\PP^4$ gives $\bbinom{7+2}{2}=36$ equations. Then, we can add $36-4$ linearly independent equations to each set $W_i$. In conclusion, by inclusion-exclusion principle
$$\adim_4(X,7)\geq \bbinom{7+4}{4} -|W_1\cup\ldots\cup W_5|=\bbinom{7+4}{4}- \sum_{i=1}^{5}|W_i|+\sum_{1\leq i\leq j\leq 5}|W_{ij}|=160.$$
\indent On the other hand, we take a hyperplane $H_5\cong\PP^3$ in such a way that $\Pi_5\subseteq H_5$.
Then
$$\Res_{H_5}(X)=\Pi_1+\ldots+\Pi_4$$ and $$\Tr_{H_5}(X)=\Pi_1|_{H_5}+\ldots+\Pi_{5}|_{H_5}.$$ By Lemma \ref{castineq} we get
$$\adim_4(X,t)\leq\adim_4(\Res_{H_5}(X),t-1)+\adim_{3}(\Tr_{H_5}(X),t).$$
And as before $$\adim_{3}(\Tr_{H_5}(X),t)=\adim_{3}(\Tr_{H_5}(X)-\Pi_5|_{H_5},t-1)$$ because $\Pi_5$ is a component for the locus of forms in $H_5$. Moreover, by Hartshorne-Hirschowitz Theorem we have $$\adim_{3}(\Tr_{H_5}(X)-\Pi_5|_{H_5},6)=\vdim_3(\Tr_{H_5}(X)-\Pi_5|_{H_5},6)=S_{3,4,6}=56.$$ We illustrate the situation as a diagram.

\begin{small}
\begin{center}
\begin{tikzcd}
\adim_4(X,7) \arrow[r,dashed] \arrow[d,dashed] & \adim_{3}(\Pi_1|_{H_5}+\ldots+\Pi_{4}|_{H_5},6)=S_{3,4,6}=56\\
\adim_4(\Pi_1+\ldots+\Pi_4,6)
\end{tikzcd}\\
\end{center}
\end{small}

\noindent We continue that procedure by taking $X'=\Pi_1+\ldots+\Pi_4$ and $H_4$ having $\Pi_4$ as a subset.

\begin{small}
\begin{center}
\begin{tikzcd}
\adim_4(\Pi_1+\ldots+\Pi_4,6) \arrow[r,dashed] \arrow[d,dashed] & \adim_{3}(\Pi_1|_{H_4}+\ldots+\Pi_{3}|_{H_4})=S_{3,4,6}=38\\
\adim_4(\Pi_1+\ldots+\Pi_3,5)
\end{tikzcd}
\end{center}
\end{small}

\noindent Then we take consecutively $H_3\supseteq\Pi_3$ and $H_2\supseteq\Pi_2$.

\begin{small}
\begin{center}
\begin{tikzcd}
\adim_4(\Pi_1+\ldots+\Pi_3,5) \arrow[r,dashed] \arrow[d,dashed] & \adim_{3}(\Pi_1|_{H_3}+\Pi_{2}|_{H_3},4)=25\\
\adim_4(\Pi_1+\Pi_2,4)\arrow[r,dashed] \arrow[d,dashed] & \adim_{3}(\Pi_1|_{H_2},4)=16\\
\adim_4(\Pi_1,3)
\end{tikzcd}
\end{center}
\end{small}

\noindent That gives us the inequality:
$$\adim_4(X,7)\leq 135+\adim_4(\Pi_1,3).$$
Since $\adim_4(\Pi_1,3)=\bbinom{7}{4}-\bbinom{5}{2}=25$, we have $\adim_4(X,7)\leq 160$.
\end{example}

\begin{theorem}\label{mult1aqv}
Let $\Pi_1,\ldots,\Pi_s$ be general linear subspaces of codimension 2 in $\PP^n$. Consider the scheme $X=\Pi_1+\ldots+\Pi_s$. If $$S_{n-2p,s-p,t}>0$$
for $p=1,\ldots, N(n,s)-1$, then $$\adim_n(X,t)\geq\vdim_n(X,t).$$
\end{theorem}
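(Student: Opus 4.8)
The plan is to prove the inequality $\adim_n(X,t) \ge \vdim_n(X,t) = S_{n,s,t}$ directly, mimicking the lower-bound argument carried out in Example \ref{mult1example}: construct an explicit collection of linearly independent linear conditions imposed by vanishing along $X = \Pi_1 + \ldots + \Pi_s$, using inclusion–exclusion over the intersection lattice of the $\Pi_i$, and show that the total count of genuinely independent conditions is at most $\HP_X(t)$, so that the dimension of $[I_X]_t$ is at least $\binom{t+n}{n} - \HP_X(t) = S_{n,s,t}$.

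Concretely, I would proceed by downward induction over the intersection strata. For each subset $J \subseteq \{1,\ldots,s\}$ with $|J| = i$ and $2i \le n$, the intersection $\Pi_J := \bigcap_{j\in J}\Pi_j$ is (for general subspaces) a linear space of codimension $2i$, and vanishing along it imposes $\binom{t+n-2i}{n-2i}$ independent conditions; I would build a set $W_J$ of that many linear equations, choosing them compatibly so that $W_J \supseteq W_{J'}$ whenever $J \subseteq J'$ — that is, the conditions already forced at a coarser stratum are reused. The key point is the bookkeeping: for a singleton $\{i\}$, the set $W_i$ must contain all the $W_{ij}$ ($j \ne i$), all the $W_{ijk}$, and so on, so the number of \emph{new} equations available to add to $W_i$ is $\binom{t+n-2}{n-2}$ minus the size of the union $\bigcup_{J \ni i,\,|J|\ge 2} W_J$; I need this number to be nonnegative, and more generally at each stratum the analogous count must be nonnegative. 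This is exactly where the hypothesis $S_{n-2p,\,s-p,\,t} > 0$ for $p = 1,\ldots,N(n,s)-1$ enters: by an inclusion–exclusion computation (structurally the same identity used to define $S_{n,s,t}$, now localized at a codimension-$2p$ stratum $\Pi_J$ with $|J|=p$, where the remaining $s-p$ subspaces cut it in codimension-$2$ pieces), the number of new equations addable at the stratum indexed by $J$ is precisely $S_{n-2p,\,s-p,\,t}$ with $p = |J|$, and the positivity hypothesis guarantees these are all $\ge 0$ so the construction does not get stuck. Once all the $W_J$ are built, inclusion–exclusion gives
\begin{align*}
\bigl|\textstyle\bigcup_i W_i\bigr| &= \sum_{i=1}^{N(n,s)} (-1)^{i-1}\binom{s}{i}\binom{t+n-2i}{n-2i} = \binom{t+n}{n} - S_{n,s,t},
\end{align*}
hence $\adim_n(X,t) \ge \binom{t+n}{n} - |\bigcup_i W_i| = S_{n,s,t} = \vdim_n(X,t)$ by Theorem \ref{thm1}.

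The main obstacle is the combinatorial heart of the construction: showing that at each stratum the number of available new linearly independent conditions is exactly $S_{n-2p,s-p,t}$, and that one can make the choices of the $W_J$ mutually compatible across all strata simultaneously (the $W_J$ for incomparable $J$'s may overlap in uncontrolled ways on even-deeper common strata, and one must verify that inclusion–exclusion still yields the stated total rather than an overcount). Handling this cleanly probably requires organizing the argument as an induction on $s$ (or on $n+s$) together with a localization step: restrict attention to a single $\Pi = \Pi_s$, note that the conditions imposed on $\Pi$ by the other subspaces form a scheme of the type $\Pi_1|_\Pi + \ldots + \Pi_{s-1}|_\Pi$ inside $\Pi \cong \PP^{n-2}$ — again general codimension-$2$ subspaces — whose relevant count is governed by $S_{n-2,s-1,t}$ via Theorem \ref{thm1}, and then feed this back through the Castelnuovo-type residual/trace splitting of Lemma \ref{castineq}, matching the recursion $S_{n,s,t} = S_{n,s-1,t-1} + S_{n-1,s-1,t-1}$ of Lemma \ref{combi1}. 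The positivity hypotheses for the intermediate ranges $p = 1,\ldots,N(n,s)-1$ are exactly what is needed to keep every term in the induction meaningful (a vanishing or negative $S$ at an intermediate stratum would mean one cannot actually impose the predicted number of independent conditions there, breaking the count).
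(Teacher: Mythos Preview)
Your proposal is correct and essentially identical to the paper's own proof: build nested families $W_J$ of linear conditions indexed by the intersection lattice, working upward from the deepest stratum $|J|=N(n,s)$, with the hypotheses $S_{n-2p,s-p,t}>0$ guaranteeing that at each level $p$ exactly $S_{n-2p,s-p,t}$ new independent conditions can be adjoined, and then conclude via inclusion--exclusion that $|W_1\cup\cdots\cup W_s|=\binom{t+n}{n}-S_{n,s,t}$. One small caveat on your closing paragraph: the Castelnuovo inequality (Lemma~\ref{castineq}) gives an \emph{upper} bound on $\adim_n$, so it is not the tool for this direction; the localization to $\Pi_s\cong\PP^{n-2}$ that you sketch is closer to the mark and is in effect what the direct construction already carries out.
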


\begin{proof}
Recall that 
\begin{gather*}
S_{n-2p,s-p,t}= \sum_{i=0}^{N(n,s)-p}(-1)^{i}\binom{s-p}{i}\binom{t+n-2p-2i}{n-2p-2i}\\
=\sum_{i=p}^{N(n,s)}(-1)^{i-p}\binom{s-p}{i-p}\binom{t+n-2i}{n-2i},
\end{gather*}
where $N(n,s)=\min\{\lfloor n/2\rfloor, s\}$. Also notice that vanishing along the subspace of codimension $c$ imposes $\bbinom{t+n-c}{n-c}$ conditions on forms of degree $t$.\par

We proceed as in Example \ref{mult1example}. We want to construct the sets $W_{i_1\ldots i_l}$ of linear equations given by vanishing of forms of degree $t$ along the intersections $\Pi_{i_1}\cap\ldots\cap\Pi_{i_l}$. Take $l=N(n,s)$, then $|W_{i_1\ldots i_l}|=\bbinom{t+n-2l}{n-2l}$ and any intersection of $l+1$ subspaces is empty. First, we set up such sets of linearly independent conditions. Then for $p=l-1$ each intersection of $p$ subspaces contains $s-p$ intersections of $p+1=l$ subspaces. We want to have $$\bbinom{t+n-2p}{n-2p}- (s-p)\bbinom{t+n-2l}{n-2l}=S_{n-2p,s-p,t}>0,$$ since in such case we can add $S_{n-2p,s-p,t}$ equations to $W_{i_1\ldots i_p}$ and have it defined properly.\par

Similarly, for any $p=1,\ldots, N(n,s)-2$ we use the equations that were previously defined for smaller subspaces and complete them to the set $W_{i_1\ldots i_p}$.  We use the inclusion-exclusion principle and that results in the inequalities of the form 
\begin{gather*}
\bbinom{t+n-2p}{n-2p}-(s-p)|W_{i_1\ldots i_{p-1}}|+\bbinom{s-p}{2}|W_{i_1\ldots i_{p-2}}|-\ldots+(-1)^{l-p}\bbinom{s-p}{l-p}|W_{i_1\ldots i_l}|\\
=S_{n-2p,s-p,t}>0.
\end{gather*}
Hence, if inequalities $S_{n-2p,s-p,t}>0$ are satisfied for $p=1,\ldots, N(n,s)-1$, then by inclusion-exclusion principle we have that
$$\adim_n(X,t)\geq \bbinom{t+n}{n}-|W_1\cup\ldots\cup W_s|=S_{n,s,t}=\vdim_n(X,t).$$
\end{proof}

In the following example we show that the conditions of the form $S_{n-2p,s-p,t}>0$ are needed in the previous theorem.

\begin{example}\label{example4} Consider $X=\Pi_1+\ldots+\Pi_{12}$ in $\PP^4$. There are no forms of degree 2 vanishing on 12 general planes. Indeed, taking a hyperplane $H\cong \PP^3$ such that $\Pi_{12}\subseteq H$ and using Lemma \ref{castineq} we get
$$\adim_4(X,2)\leq\adim_4(\Pi_1+\ldots+\Pi_{11},1)+\adim_3(\Pi_1|_H+\ldots+\Pi_{11}|_H,1)=0.$$
On the other hand, by Theorem \ref{thm1} 
$$\vdim_4(X,2)=\bbinom{2+4}{4}-12\bbinom{2+2}{2}+\bbinom{12}{2}=9.$$
The assumptions of Theorem \ref{mult1alv} are not satisfied, since $S_{2,11,2}=-5.$
\end{example}

For the rest of this section we assume that $X=\Pi_1+\ldots+\Pi_{n+1}\subseteq\PP^n$, where $\Pi_1,\ldots,\Pi_{n+1}$ are general linear subspaces of codimension 2 and $n\geq 2$. We want to compare the virtual and actual dimension of the system of forms of degree $n+k$ vanishing along $X$. For this purpose, we prove the next two theorems. %First, we want to use the previous theorem in such case. Hence, we prove the following.

\begin{theorem}\label{mult1cond}
If $k\geq 3$, then we have $$S_{n-2p,n+1-p,n+k}>0$$
for $p=0,\ldots, \lfloor n/2\rfloor-1$.
\end{theorem}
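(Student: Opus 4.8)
The plan is to evaluate $S_{n-2p,\,n+1-p,\,n+k}$ in closed form by a short generating-function manipulation and to observe that the answer is visibly a sum of nonnegative quantities. Write $N=n-2p$, $\sigma=n+1-p$ and $\tau=n+k$. Since $p\le\lfloor n/2\rfloor-1$ we have $N\ge 2$, and $\sigma=N+p+1>N$, so the summation index in the definition of $S_{N,\sigma,\tau}$ runs up to $\lfloor N/2\rfloor$. Using $\binom{\tau+N-2i}{N-2i}=[z^\tau](1-z)^{-(N+1-2i)}$ one gets
$$S_{N,\sigma,\tau}=[z^\tau]\,(1-z)^{-(N+1)}\sum_{i=0}^{\lfloor N/2\rfloor}\binom{\sigma}{i}\bigl(-(1-z)^2\bigr)^{i}.$$

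The key point is that the truncated sum may be replaced, inside the coefficient of $z^\tau$, by the complete one $\sum_{i=0}^{\sigma}\binom{\sigma}{i}\bigl(-(1-z)^2\bigr)^{i}=\bigl(1-(1-z)^2\bigr)^{\sigma}=z^{\sigma}(2-z)^{\sigma}$. Indeed, the discarded tail contributes $\sum_{i=\lfloor N/2\rfloor+1}^{\sigma}\binom{\sigma}{i}(-1)^i\,[z^\tau](1-z)^{2i-N-1}$, and for each index $i$ in this range the exponent $2i-N-1$ is a nonnegative integer that is at most $2\sigma-N-1=n+1<n+k=\tau$; here the hypothesis $k\ge 3$ (in fact $k\ge 2$ would do) is used. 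Thus $(1-z)^{2i-N-1}$ is a polynomial of degree smaller than $\tau$, so its $z^\tau$-coefficient vanishes. Hence $S_{N,\sigma,\tau}=[z^{\tau-\sigma}]\,(2-z)^{\sigma}(1-z)^{-(N+1)}$. Expanding $2-z=1+(1-z)$ and dropping, for the same reason, the terms with $j>N$ (their exponent $j-N-1$ is at most $\sigma-N-1=p$, which is strictly less than $\tau-\sigma=p+k-1$), one arrives at
$$S_{n-2p,\,n+1-p,\,n+k}=\sum_{j=0}^{N}\binom{\sigma}{j}\binom{\tau-\sigma+N-j}{N-j}=\sum_{j=0}^{n-2p}\binom{n+1-p}{j}\binom{n-p+k-1-j}{\,n-2p-j\,}.$$
Every summand is a product of nonnegative binomial coefficients, and the $j=0$ term equals $\binom{n-p+k-1}{n-2p}$, which is strictly positive since $n-p+k-1\ge n-2p$. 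This gives $S_{n-2p,\,n+1-p,\,n+k}>0$, as required.

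The argument is elementary once the setup is in place; the only points needing care are the two ``degree truncation'' steps — one must check that $2i-N-1\ge 0$ for $\lfloor N/2\rfloor<i\le\sigma$ and that $j-N-1\ge 0$ for $N<j\le\sigma$, so that the relevant powers of $1-z$ really are polynomials, and that $\tau$ strictly exceeds both of the resulting degrees, which is exactly what the bound on $k$ provides. As a consistency check, the closed formula reproduces $S_{4,5,7}=160$ and $S_{2,4,7}=32$. An alternative would be an induction on $n+p$ via Lemma~\ref{combi1}, but that route forces separate base cases $p=0$ and $N=2$ (the first requiring the auxiliary fact $S_{N,N,\tau}>0$ for $\tau\ge N$, proved the same way), so the generating-function computation, which settles everything at once, is preferable.
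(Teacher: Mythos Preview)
Your proof is correct and takes a genuinely different route from the paper. The paper proceeds by a double induction: it strengthens the claim to $S_{n-2p,\,n+1-p-j,\,n+k-j}>0$ for all $j=0,\dots,n-p$, verifies the case $n=2$ directly, handles the terminal case $j=n-p$ by a one-line estimate on $S_{\tilde n,1,\tilde k}$, and then feeds Lemma~\ref{combi1} into a reverse induction on $j$ together with the outer induction on $n$. Your argument bypasses all of this by extracting a closed form via generating functions: after two controlled truncations (both justified by the single inequality $k\ge 2$) you obtain
\[
S_{n-2p,\,n+1-p,\,n+k}=\sum_{j=0}^{n-2p}\binom{n+1-p}{j}\binom{n-p+k-1-j}{n-2p-j},
\]
which is manifestly positive. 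The paper's approach has the virtue of reusing exactly the recursive machinery (Lemma~\ref{combi1}, the Castelnuovo-style splittings) that drives the neighbouring Theorems~\ref{thm1} and~\ref{mult1alv}, so it fits the narrative. Your approach is shorter, needs no auxiliary induction hypothesis, and yields an explicit formula that makes the positivity transparent; it also shows that $k\ge 2$ already suffices, slightly sharpening the statement.
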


\begin{proof}
We will prove by induction on $n$ that $$S_{n-2p,n+1-p-j,n+k-j}>0$$
for $j=0,\ldots,n-p$.\\
For $n=2$ we need to consider $p=0$ and $j=0,1,2$ and we get
$$S_{2,3,k+2}=\bbinom{k+4}{2}-3>0,\quad S_{2,2,k+1}=\bbinom{k+3}{2}-2>0,\quad S_{2,1,k}=\bbinom{k+2}{2}-1>0.$$
Fix $n$ and $p$. First, we consider the case $j=n-p$ and show that $S_{n-2p,1,k+p}>0$. Indeed, notice that for $\tilde{n}=n-2p\geq 2$ and $\tilde{k}=k+p\geq3$, we have 
$$S_{n-2p,1,k+p}=S_{\tilde{n},1,\tilde{k}}=\binom{\tilde{k}+\tilde{n}}{\tilde{n}}-\binom{\tilde{k}+\tilde{n}-2}{\tilde{n}-2}=\binom{\tilde{k}+\tilde{n}-2}{\tilde{n}-2}\cdot\left(\frac{(\tilde{k}+\tilde{n}-1)(\tilde{k}+\tilde{n})}{(\tilde{n}-1)\tilde{n}}-1\right)>0.$$
For $j\leq n-p-1$ by Lemma \ref{combi1}
$$S_{n-2p,n+1-p-j,n+k-j}= S_{n-2p,n+1-p-(j+1),n+k-(j+1)} +S_{(n-1)-2p,(n-1)+1-p-j,(n-1)+k-j}.$$
By reverse induction on $j$ we have $$S_{n-2p,n+1-p-(j+1),n+k-(j+1)}>0.$$
If $p\leq \lfloor \frac{n-1}{2}\rfloor-1$, we have $$S_{(n-1)-2p,(n-1)+1-p-j,(n-1)+k-j}>0$$ by induction on $n$. For odd $n$ we consider 
$p=0,\ldots,\lfloor \frac{n}{2}\rfloor-1=\lfloor \frac{n-1}{2}\rfloor-1$ and for even $n$ we take
$p=0,\ldots,\lfloor \frac{n}{2}\rfloor-1=\lfloor \frac{n-1}{2}\rfloor.$ However, in the case of $p=\lfloor\frac{n-1}{2}\rfloor$ for even $n$, we have 
$$S_{(n-1)-2p,(n-1)+1-p-j,(n-1)+k-j}=S_{0,(n-1)+1-p-j,(n-1)+k-j}=1.$$

\end{proof}

\begin{theorem}\label{mult1alv}
For $k\geq 3$, we have that $$\adim_n(X,n+k)\leq\vdim_n(X,n+k).$$
\end{theorem}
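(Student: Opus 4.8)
The plan is to reduce, via Theorem \ref{thm1} (which identifies $\vdim_n(X,n+k)$ with $S_{n,n+1,n+k}$), to proving the inequality $\adim_n(X,n+k)\le S_{n,n+1,n+k}$, and to obtain this from a self-strengthening statement proved by induction on $n+s$, in the spirit of the computation carried out in Example \ref{mult1example}. Explicitly, I would prove the claim $(\ast)$: \emph{for every $n'\ge 2$, every $s'$ with $0\le s'\le n'+1$, every integer $t'\ge s'+2$, and general codimension $2$ subspaces $\Pi_1,\ldots,\Pi_{s'}\subseteq\PP^{n'}$, one has $\adim_{n'}(\Pi_1+\ldots+\Pi_{s'},t')\le S_{n',s',t'}$.} The theorem then follows by applying $(\ast)$ with $n'=n$, $s'=n+1$ and $t'=n+k$: the hypothesis $k\ge 3$ is exactly what gives $t'=n+k\ge(n+1)+2=s'+2$.

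For the base of the induction I would dispose of the two situations in which the Castelnuovo reduction is not available. If $s'=0$, then $\adim_{n'}(\emptyset,t')=\binom{t'+n'}{n'}=S_{n',0,t'}$. If $n'=2$, then $\Pi_1,\ldots,\Pi_{s'}$ are at most three general points of $\PP^2$ (since $s'\le n'+1=3$), and since $s'\le\binom{t'+2}{2}$ they impose $s'$ independent conditions on forms of degree $t'$, whence $\adim_2(\Pi_1+\ldots+\Pi_{s'},t')=\binom{t'+2}{2}-s'=S_{2,s',t'}$. For the inductive step, with $n'\ge 3$ and $s'\ge 1$, I would write $X=\Pi_1+\ldots+\Pi_{s'}$, pick a general hyperplane $H\cong\PP^{n'-1}$ containing $\Pi_{s'}$, and observe, exactly as in Example \ref{mult1example}, that $\Res_H(X)=\Pi_1+\ldots+\Pi_{s'-1}$ while $\Tr_H(X)=\Pi_1|_H+\ldots+\Pi_{s'}|_H$ has $\Pi_{s'}|_H=\Pi_{s'}$ as a fixed hyperplane component inside $H$, so that $\adim_{n'-1}(\Tr_H(X),t')=\adim_{n'-1}(\Pi_1|_H+\ldots+\Pi_{s'-1}|_H,t'-1)$. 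Lemma \ref{castineq} then gives
\[
\adim_{n'}(X,t')\le\adim_{n'}(\Pi_1+\ldots+\Pi_{s'-1},t'-1)+\adim_{n'-1}(\Pi_1|_H+\ldots+\Pi_{s'-1}|_H,t'-1).
\]
I would then check that both triples $(n',s'-1,t'-1)$ and $(n'-1,s'-1,t'-1)$ again satisfy the hypotheses of $(\ast)$ and have strictly smaller $n'+s'$: here the constraint $s'\le n'+1$ is precisely what preserves $s'-1\le(n'-1)+1$ in the trace term, and $t'\ge s'+2$ propagates to $t'-1\ge(s'-1)+2$. Applying the inductive hypothesis to each summand, and then Lemma \ref{combi1} in the form $S_{n',s'-1,t'-1}+S_{n'-1,s'-1,t'-1}=S_{n',s',t'}$, closes the induction.

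The step that requires genuine care — and the main obstacle — is the claim that the traces $\Pi_1|_H,\ldots,\Pi_{s'-1}|_H$ are themselves \emph{general} codimension $2$ subspaces of $H\cong\PP^{n'-1}$; this is needed because the inductive hypothesis concerns general configurations while $\adim$ is only upper semicontinuous, so a non-generic specialization could only make it larger. I would settle it by the usual dominance argument: with $\Pi_{s'}$ fixed and $H$ a general hyperplane through it, the assignment $\Pi\mapsto\Pi\cap H$ is dominant from codimension $2$ subspaces of $\PP^{n'}$ onto codimension $2$ subspaces of $H$, and $\Pi_1,\ldots,\Pi_{s'-1}$ are chosen independently of one another and of $H$ — the very same genericity that is used, implicitly, in the proof of Theorem \ref{thm1} and in Example \ref{mult1example}. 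Beyond this, everything is bookkeeping: one only has to keep the inequalities $s'\le n'+1$ and $t'\ge s'+2$ in force throughout the reduction, so that the two schemes produced at each stage remain covered by $(\ast)$.
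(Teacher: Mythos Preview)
Your proof is correct and follows essentially the same approach as the paper: both reduce via the Castelnuovo inequality (Lemma~\ref{castineq}) and the combinatorial identity of Lemma~\ref{combi1} to a strengthened claim that is proved inductively, with $n'=2$ (points in $\PP^2$) serving as the base and the trace-plus-residual decomposition providing the step. The only cosmetic difference is the packaging of the induction---you induct on $n'+s'$ over all triples $(n',s',t')$ with $s'\le n'+1$ and $t'\ge s'+2$, while the paper fixes $k$ and runs a double induction (on $n$, and reverse on $j$) over the family $\adim_n(\Pi_1+\ldots+\Pi_{n+1-j},\,n+k-j)$---but the two parametrizations coincide under $s'=n+1-j$, $t'=n+k-j$, and the logical content is identical.
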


\begin{proof} First, notice that $$\adim_n(\Pi,k)=\vdim_n(\Pi,k)=S_{n,1,k},$$ where $\Pi$ is a codimension 2 linear subspace in $\PP^n$.
We will prove by induction on $n$ that
$$\adim_n(\Pi_1+\ldots+\Pi_{n+1-j},n+k-j)\leq S_{n,n+1-j,n+k-j}
$$
for $j=0,\ldots, n-1$. We proceed similarly to the proof of the previous theorem.\\
For $n=2$ the linear subspaces of codimension 2 are points, so
\begin{gather*}
\adim_2(\Pi_1+\Pi_2+\Pi_3,k+2)=\vdim_2(\Pi_1+\Pi_2+\Pi_3,k+2)=S_{2,3,k+2},\\
\adim_2(\Pi_1+\Pi_2,k+1)=\vdim_2(\Pi_1+\Pi_2,k+1)=S_{2,2,k+1}.
\end{gather*}
We fix $n$ and use Lemma \ref{castineq} by taking such hyperplane $H$ that $\Pi_{n+1-j}\subseteq H\cong\PP^{n-1}$. We get
\begin{multline*}
\adim_n(\Pi_1+\ldots+\Pi_{n+1-j},n+k-j)\leq \adim_n(\Pi_1+\ldots+\Pi_{n+1-(j+1)},n+k-(j+1))\\
+\ \adim_{n-1}(\Pi_1+\ldots+\Pi_{(n-1)+1-j},(n-1)+k-j).
\end{multline*}
By reverse induction on $j$ and induction on $n$ the right-hand side is not greater than 
$$S_{n,n-j,n+k-j+1}+S_{n-1,n-j,n+k-j+1}.$$
By Lemma \ref{combi1} it is equal to $S_{n,n+1-j, n+k-j}$.
\end{proof}

\begin{corollary}\label{mult1eq}
By Theorems \ref{mult1aqv}, \ref{mult1cond} and \ref{mult1alv} we have
$$\adim_n(X,n+k)=\vdim_n(X,n+k)>0.$$
for $k\geq 3$.
\end{corollary}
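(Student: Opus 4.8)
The plan is to assemble the three preceding theorems after specializing to $s=n+1$. Throughout fix $t=n+k$ with $k\geq 3$, and observe that for $X=\Pi_1+\ldots+\Pi_{n+1}$ we have $N(n,n+1)=\min\{\lfloor n/2\rfloor,n+1\}=\lfloor n/2\rfloor$.

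First I would apply Theorem \ref{mult1cond}, which yields $S_{n-2p,n+1-p,n+k}>0$ for every $p=0,1,\ldots,\lfloor n/2\rfloor-1$. In particular the subrange $p=1,\ldots,\lfloor n/2\rfloor-1=N(n,n+1)-1$ is covered, so the hypothesis of Theorem \ref{mult1aqv} is satisfied for $X$ at degree $t=n+k$ (when $n\in\{2,3\}$ this range is empty and the hypothesis holds vacuously). Hence $\adim_n(X,n+k)\geq\vdim_n(X,n+k)$. Next I would invoke Theorem \ref{mult1alv}, which gives directly the reverse inequality $\adim_n(X,n+k)\leq\vdim_n(X,n+k)$ for $k\geq 3$. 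Combining the two inequalities produces the asserted equality.

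For the strict positivity I would use the remaining case $p=0$ of Theorem \ref{mult1cond}: since $n\geq 2$ forces $\lfloor n/2\rfloor-1\geq 0$, the index $p=0$ is always in range, so $S_{n,n+1,n+k}>0$. By Theorem \ref{thm1}, $\vdim_n(X,n+k)=S_{n,n+1,n+k}$, and together with the equality just established this gives $\adim_n(X,n+k)=\vdim_n(X,n+k)>0$.

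There is no substantial obstacle remaining once Theorems \ref{mult1aqv}, \ref{mult1cond} and \ref{mult1alv} are in hand; the only point requiring care is the bookkeeping of index ranges, namely checking that the interval $p=1,\ldots,N(n,n+1)-1$ demanded by Theorem \ref{mult1aqv} sits inside the interval $p=0,\ldots,\lfloor n/2\rfloor-1$ supplied by Theorem \ref{mult1cond} (which holds because $N(n,n+1)=\lfloor n/2\rfloor$), together with a sanity check that the small-$n$ cases where the former interval is empty are handled correctly.
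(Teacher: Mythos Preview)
Your proposal is correct and is exactly the assembly the paper has in mind: the corollary in the paper is stated without any separate proof, simply citing Theorems \ref{mult1aqv}, \ref{mult1cond} and \ref{mult1alv}, and your argument is the natural unpacking of that citation. The only addition you make beyond what the paper names explicitly is the appeal to Theorem \ref{thm1} to identify $\vdim_n(X,n+k)$ with $S_{n,n+1,n+k}$ for the positivity claim, but that identification is already used inside the proof of Theorem \ref{mult1aqv}, so this is consistent with the paper's reasoning.
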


\section{Images of linear systems under Veneroni transformations}
Now we focus on the linear systems that are transformations of $$S=(n+k)H-\Pi_1-\ldots-\Pi_{n+1}$$ by the Veneroni maps, where $\Pi_1,\ldots,\Pi_{n+1}\subseteq\PP^n$ are codimension 2 general linear subspaces and $k\geq 3$. As we explained in Section 4, such systems are of the form $$S'=(nk+1)H-k\Pi_1-\ldots-k\Pi_{n+1}.$$
We will prove that for $k=3$ and $n\geq 6$ the virtual dimension of $S'$ is always non-positive.\par\medskip

First, we  continue the analysis of the example presented in \cite[Ex. 4.2.]{New}, which shows in particular that the assumption $n\geq 6$ is essential.
\begin{example}\label{mult3example}
Let $X'=3\Pi_1+\ldots+3\Pi_5\subseteq\PP^4$. We want to find $\vdim_4(X',13)$. As before, we assume that $H\cong\PP^3$ is such a hyperplane that $\Pi_5\subseteq H$. Then
$$\Res_{H}(X')=3\Pi_1+\ldots+3\Pi_4+2\Pi_5$$ and $$\Tr_{H}(X')=3\Pi_1|_{H}+\ldots+3\Pi_{5}|_{H}.$$
By Lemma \ref{casteq} and the fact that $3\Pi_{5}$ is a component for the zero locus of forms in $H$ we have
$$\vdim_4(X',t)=\vdim_4(\Res_{H}(X'),t-1)+\vdim_{3}(\Tr_{H}(X')-3\Pi_{5}|_H,t-3).$$
We repeat that procedure as presented in the diagram. To alleviate notation, at each level we write $\Pi_i|_H$, although at each step we take different hyperplanes $H_i$.

\begin{small}
\begin{center}
\begin{tikzcd}
\vdim_4(3\Pi_1+\ldots+3\Pi_5,13) \arrow[r] \arrow[d] & \vdim_{3}(3\Pi_1|_{H}+\ldots+3\Pi_{4}|_{H},10)\\
\vdim_4(3\Pi_1+\ldots+3\Pi_4+2\Pi_5,12)\arrow[r] \arrow[d] & \vdim_{3}(3\Pi_1|_{H}+\ldots+3\Pi_{4}|_{H},10)\\
\vdim_4(3\Pi_1+\ldots+3\Pi_4+\Pi_5,11)\arrow[r] \arrow[d] & \vdim_{3}(3\Pi_1|_{H}+\ldots+3\Pi_{4}|_{H},10)\\
\vdim_4(3\Pi_1+\ldots+3\Pi_4,10)\arrow[r] \arrow[d] & \vdim_{3}(3\Pi_1|_{H}+\ldots+3\Pi_{3}|_{H},7)\\
\vdim_4(3\Pi_1+3\Pi_2+3\Pi_3+2\Pi_4,9)\arrow[r] \arrow[d] & \vdim_{3}(3\Pi_1|_{H}+\ldots+3\Pi_{3}|_{H},7)\\
\vdim_4(3\Pi_1+3\Pi_2+3\Pi_3+\Pi_4,8)\arrow[r] \arrow[d] & \vdim_{3}(3\Pi_1|_{H}+\ldots+3\Pi_{3}|_{H},7)\\
\vdim_4(3\Pi_1+3\Pi_2+3\Pi_3,7)\arrow[r] \arrow[d] & \vdim_{3}(3\Pi_1|_{H}+3\Pi_{2}|_{H},4)\\
\vdots \arrow[d]\\
\vdim_4(3\Pi_1+3\Pi_2,4)\arrow[r] \arrow[d] & \vdim_{3}(3\Pi_1|_{H},1)\\
\vdots \arrow[d]\\
\vdim_4(3\Pi_1,1)
\end{tikzcd}
\end{center}
\end{small}

\noindent It follows that 
\begin{multline*}
\vdim_4(3\Pi_1+\ldots+3\Pi_5,13)=3\big[\vdim_{3}(3\Pi_1|_{H}+\ldots+3\Pi_{4}|_{H},10)\big]\\
+\ \vdim_4(3\Pi_1+\ldots+3\Pi_4,10)
\end{multline*}
and so forth. That gives us 
\begin{multline*}
\vdim_4(3\Pi_1+\ldots+3\Pi_5,13)=3\big[\vdim_{3}(3\Pi_1|_{H}+\ldots+3\Pi_{4}|_{H},10)\\
+\vdim_{3}(3\Pi_1|_{H}+\ldots+3\Pi_{3}|_{H},7)
+\vdim_{3}(3\Pi_1|_{H}+3\Pi_{2}|_{H},4)+\vdim_{3}(3\Pi_1|_{H},1)\big]\\+\vdim_4(3\Pi_1,1).
\end{multline*}
One can check that $\vdim_{3}(3\Pi_1|_{H},1)=\vdim_4(3\Pi_1,1)=0.$ To find remaining values we repeat the procedure in $\PP^3$, we denote $S_i:=\Pi_i|_{H}$ and hyperplanes in $\PP^3$ as $H'$.

%\newpage
\begin{small}
\begin{center}
\begin{tikzcd}
\vdim_{3}(3S_1+\ldots+3S_{4},10) \arrow[r] \arrow[d] & \vdim_{2}(3S_1|_{H'}+\ldots+3S_3|_{H'},7)=18\\
\vdots \arrow[d]\\
\vdim_{3}(3S_1+\ldots+3S_{3},7) \arrow[r] \arrow[d] & \vdim_{2}(3S_1|_{H'}+3S_2|_{H'},4)=3\\
\vdots \arrow[d]\\
\vdim_{3}(3S_1+3S_{2},4) \arrow[r] \arrow[d] & \vdim_{2}(3S_1|_{H'},1)=-3\\
\vdots \arrow[d]\\
\vdim_3(3S_1,1)=0
\end{tikzcd}
\end{center}
\end{small}
The numbers on the right are easily calculated, since we consider vanishing at points in $\PP^2$.
Hence, $\vdim_{3}(3S_1+3S_{2},4)=-9$, $\vdim_{3}(3S_1+\ldots+3S_{3},7)=0$ and $\vdim_{3}(3S_1+\ldots+3S_{4},10)=54$. As a consequence $\vdim_4(X',13)=135$.
\end{example}

\begin{remark}\label{rem:proc}In the case of multiplicities 1 we proved that there is a formula for the virtual dimension of a given system. When we consider higher multiplicities the situation becomes much more complicated. We can use Lemma \ref{casteq} to find the virtual dimension as in the previous example. The procedure that returns the value
$$\vdim_n(m_1\Pi_1+\ldots+m_s\Pi_{s},t)$$
for given $n\geq2$, $m_i>0$ and $s\geq0$ can be easily implemented in Singular or Maple.\\
Notice that if $\Pi_s\subseteq H\cong \PP^{n-1}$, then
$$\Res_{H}(m_1\Pi_1+\ldots+m_s\Pi_{s})=m_1\Pi_1+\ldots+(m_s-1)\Pi_{s}$$ and $$\Tr_{H}(m_1\Pi_1+\ldots+m_s\Pi_{s})-m_s\Pi_s=m_1\Pi_1+\ldots+m_{s-1}\Pi_{s-1}.$$
By Lemma \ref{casteq} $\vdim_n(m_1\Pi_1+\ldots+m_s\Pi_{s},t)$ is equal to
$$\vdim_n(m_1\Pi_1+\ldots+(m_s-1)\Pi_{s},t-1)+\vdim_{n-1}(m_1\Pi_1+\ldots+m_{s-1}\Pi_{s-1},t-m_s).$$
We define the procedure recursively starting from the conditions
$$\vdim_n(X,t)=\binom{t+ n}{n}$$
for $X=\varnothing$ and
$$\vdim_2(m_1\Pi_1+\ldots+m_s\Pi_{s},t)=\binom{t+ 2}{2}-\sum_{i=1}^s \binom{m_i+1}{2}.$$
\end{remark}

\begin{lemma}\label{multk}For $n\geq k\geq 3$ we have $$\vdim_n(k\Pi,1)=0.$$
\end{lemma}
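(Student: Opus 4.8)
Since only the \emph{virtual} dimension is at issue, the plan is to compute the two terms of $\vdim_n(k\Pi,1)=\dim_K[R]_1-\HP_{k\Pi}(1)$ directly. The first is immediate: $\dim_K[R]_1=\bbinom{n+1}{n}=n+1$. For the second I would apply Lemma~\ref{cond} with codimension $2$ and multiplicity $k$, which gives, as an identity valid for every $t>0$,
$$\HP_{k\Pi}(t)=c_{n,2,k,t}=\sum_{i=0}^{k-1}\bbinom{i+1}{1}\bbinom{t-i+n-2}{n-2}=\sum_{i=0}^{k-1}(i+1)\bbinom{t-i+n-2}{n-2},$$
and then substitute $t=1$.

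The key step is to isolate the summands that survive at $t=1$. Because $k\le n$, for each index $0\le i\le k-1$ we have $0\le n-1-i\le n-1$, so $\bbinom{n-1-i}{n-2}$ is a genuine binomial coefficient, equal to $0$ unless $n-1-i\ge n-2$, that is, unless $i\in\{0,1\}$; and since $k\ge 3$ both of these indices occur in the range. Hence
$$\HP_{k\Pi}(1)=1\cdot\bbinom{n-1}{n-2}+2\cdot\bbinom{n-2}{n-2}=(n-1)+2=n+1,$$
so $\vdim_n(k\Pi,1)=(n+1)-(n+1)=0$.

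I do not anticipate a genuine obstacle: the computation is elementary once Lemma~\ref{cond} is available, and the hypothesis $n\ge k$ is exactly what keeps $\bbinom{n-1-i}{n-2}$ an honest binomial coefficient (nonnegative top entry) for every index in range, so no subtlety about polynomial versus combinatorial conventions for binomials intrudes. As a cross-check, the same value drops out of the recursion in Remark~\ref{rem:proc}: taking a hyperplane $H$ with $\Pi\subseteq H$ gives $\Res_H(k\Pi)=(k-1)\Pi$ and $\Tr_H(k\Pi)-k\Pi|_H=\varnothing$, so $\vdim_n(k\Pi,1)=\vdim_n\big((k-1)\Pi,0\big)+\vdim_{n-1}(\varnothing,1-k)$; the last term vanishes since $1-k<0$, and $\vdim_n\big((k-1)\Pi,0\big)=\bbinom{n}{n}-c_{n,2,k-1,0}=1-1=0$ by the same kind of evaluation, again giving $0$.
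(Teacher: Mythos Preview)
Your proof is correct and follows essentially the same route as the paper: both arguments evaluate $\vdim_n(k\Pi,1)=(n+1)-c_{n,2,k,1}$ via Lemma~\ref{cond}, observe that only the $i=0$ and $i=1$ summands survive (the paper exhibits a zero factor in the product expansion, you invoke $\binom{m}{r}=0$ for $0\le m<r$, which is the same thing), and conclude $c_{n,2,k,1}=(n-1)+2=n+1$. One small remark on your cross-check: the reason $\vdim_{n-1}(\varnothing,1-k)=0$ is not literally ``$1-k<0$'' but rather that $\binom{n-k}{n-1}=0$ because $0\le n-k<n-1$ under the hypothesis $n\ge k\ge 3$; this is harmless since it is only a sanity check and not your main argument.
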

\begin{proof} By Lemma \ref{cond} we have that $$\adim_n(k\Pi,t)=\binom{t+n}{n}-c_{n,2,k,t}=\binom{t+n}{n}-\sum_{i=0}^{k-1}(i+1)\binom{t-i+n-2}{n-2}$$ for $t\geq k$. Denote the summands in $c_{n,2,k,t}$ as $c_i(t)=(i+1)\binom{t-i+n-2}{n-2}$ treated as polynomials with respect to $t$. Then $$\vdim_n(k\Pi,1)=n+1-\sum_{i=0}^{k-1}c_i(1).$$\\
If $n>2$, then $c_0(1)=\binom{1+n-2}{n-2}=n-1$, $c_1(1)=2\cdot\binom{n-2}{n-2}=2$  and $c_i(1)=0$ for $i=2,\ldots k-1$. Indeed, $$c_i(t)=(i+1)\cdot \frac{(t-i+1)(t-i+2)\cdot\ldots\cdot (t-i+n-2)}{(n-2)!}$$
and $1\leq i-1\leq k-1\leq n-2$, so  one of the terms in the numerator is equal 0.
Hence, $\vdim_n(k\Pi,1)=n+1-(n-1)-2=0.$
\end{proof}

\begin{theorem} \label{mult3}For $n\geq 6$ we have that $$\vdim_n(3\Pi_1+\ldots+3\Pi_{n+1},3n+1)\leq0.$$
\end{theorem}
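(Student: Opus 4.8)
The plan is to run the same Castelnuovo-type reduction used in Example~\ref{mult3example} and Remark~\ref{rem:proc}, but now at the level of the recursion, and to extract a closed formula (or at least a sign-controlled formula) for $f(n,k):=\vdim_n(k\Pi_1+\ldots+k\Pi_{n+1},kn+1)$. Fix $k=3$. First I would iterate the identity from Remark~\ref{rem:proc}: choosing $H\supseteq\Pi_{n+1}$ and peeling off the multiplicity of $\Pi_{n+1}$ one unit at a time (three steps, since the residual degree drops by $1$ each time while the multiplicity drops from $3$ to $0$), one gets
\begin{gather*}
\vdim_n(3\Pi_1+\ldots+3\Pi_{n+1},3n+1)
= \vdim_n(3\Pi_1+\ldots+3\Pi_n,3n-2)\\
{}+ \vdim_{n-1}(3\Pi_1|_H+\ldots+3\Pi_n|_H,3n-2)
+ \vdim_{n-1}(3\Pi_1|_H+\ldots+3\Pi_n|_H,3n-3)\\
{}+ \vdim_{n-1}(3\Pi_1|_H+\ldots+3\Pi_n|_H,3n-4).
\end{gather*}
Then I would continue peeling off $\Pi_n, \Pi_{n-1},\ldots$ from the first summand in the same way, so that after $n+1$ stages the $\PP^n$-term is exhausted (it ends at $\vdim_n(3\Pi_1,1)=0$ by Lemma~\ref{multk}) and everything is expressed as a sum of $\vdim_{n-1}$-values of systems $3\Pi_1|_H+\ldots+3\Pi_j|_H$ in $\PP^{n-1}$ at three consecutive degrees near $3j-2$. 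This is exactly the bookkeeping made explicit in Example~\ref{mult3example} for $n=4$.

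The key observation I would aim to isolate is that, because the degree sequence $3n+1, 3n-2, 3n-5,\ldots$ hits precisely the critical values for which the peeled-off $\PP^{n-1}$-systems $3\Pi_1|_H+\ldots+3\Pi_j|_H$ are being evaluated at degree $3j-2,3j-3,3j-4$, the whole computation collapses to a one-variable recursion. Concretely I expect to obtain something like
\begin{equation*}
f(n,3) = \sum_{j=1}^{n}\Big[\vdim_{n-1}(3\Pi_1|_H+\ldots+3\Pi_j|_H,3j-2)+\vdim_{n-1}(\cdot\,,3j-3)+\vdim_{n-1}(\cdot\,,3j-4)\Big],
\end{equation*}
and then reduce each $\vdim_{n-1}(3\Pi_1|_H+\ldots+3\Pi_j|_H,\,d)$ again by the same three-step peel in $\PP^{n-1}$, tracking the dependence on $n$. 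The cleanest route is probably to prove by induction on $n$ a two- or three-term recurrence of the shape $f(n,3)\le A(n)f(n-1,3)+ (\text{correction})$, or, better, to compute $f(n,3)$ outright as an explicit polynomial in $n$ (it should be a polynomial, since $\vdim$ is built from binomial coefficients via Theorem~\ref{thm1}-style identities and Lemma~\ref{combi1}), and then check directly that this polynomial is $\le 0$ for $n\ge 6$. The base cases $n=4,5$ (where the inequality need \emph{not} hold — indeed $f(4,3)=135>0$ by Example~\ref{mult3example}, which is why the hypothesis $n\ge6$ appears) serve as a sanity check on the formula rather than as induction starts; the genuine base case for the sign statement is $n=6$, verified by direct substitution.

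The main obstacle will be controlling the $\PP^2$-contributions that appear at the bottom of every branch of the recursion. As Example~\ref{mult3example} shows, the innermost values are $\vdim_2(3S_1+\ldots+3S_i|_{H'},\,d)=\binom{d+2}{2}-i\binom{4}{2}$ evaluated at small degrees $d\in\{1,\ldots,7\}$, where these numbers are frequently negative; summing many such terms with the right multiplicities (which are themselves binomial coefficients generated by the repeated peeling) is where a naive estimate would lose the sign. To handle this I would not estimate term-by-term but instead assemble the full generating identity: express $f(n,3)$ as a single alternating binomial sum — essentially an inclusion–exclusion count analogous to $S_{n,s,t}$ but weighted by the multiplicity pattern of a degree-$3$, multiplicity-$3$ system — and then simplify using the Vandermonde/absorption identities (the same toolkit that proves Lemma~\ref{combi1} in the Appendix). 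Once $f(n,3)$ is in closed form, monotonicity in $n$ for $n\ge 6$ should be immediate, e.g. by exhibiting the leading term with a negative coefficient or by a short finite difference argument. If a fully closed form proves unwieldy, the fallback is the induction $f(n,3)\le n\cdot f(n-1,3)$ for $n$ large together with a careful hand-evaluation of $f(6,3)\le 0$, which suffices.
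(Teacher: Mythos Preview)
Your first displayed identity is wrong, and this error hides the one observation that makes the theorem a two-line induction. When you peel $\Pi_{n+1}$ three times via Remark~\ref{rem:proc}, at each step the multiplicity of $\Pi_{n+1}$ drops, so the fixed component in $H$ is $3\Pi_{n+1}|_H$, then $2\Pi_{n+1}|_H$, then $\Pi_{n+1}|_H$. Hence the trace degrees are $(3n+1)-3$, $(3n)-2$, $(3n-1)-1$, i.e.\ \emph{all three equal $3n-2$}. (Look again at the diagram in Example~\ref{mult3example}: the right-hand column repeats $\vdim_3(3\Pi_1|_H+\ldots+3\Pi_4|_H,10)$ three times.) So the correct identity is
\[
\vdim_n\!\Big(\sum_{i\le n+1-j}3\Pi_i,\ 3(n-j)+1\Big)
=\vdim_n\!\Big(\sum_{i\le n-j}3\Pi_i,\ 3(n-j-1)+1\Big)
+3\,\vdim_{n-1}\!\Big(\sum_{i\le n-j}3\Pi_i|_H,\ 3((n-1)-j)+1\Big),
\]
and both terms on the right have the \emph{same} shape as the left. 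This closes up into a double induction (on $n$ and reverse on $j$) whose base is the list of six values at $n=6$ together with $\vdim_n(3\Pi,1)=0$ from Lemma~\ref{multk}; that is exactly the paper's proof.

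Because you wrote $3n-2,3n-3,3n-4$ instead, you never see this collapse, and the rest of your proposal is a plan to tame a mess that does not exist: a hoped-for closed polynomial formula for $f(n,3)$, an unspecified generating-function identity, or a fallback inequality $f(n,3)\le n\cdot f(n-1,3)$. None of these is carried out, and the fallback as stated would not even go in the right direction without the strengthened inductive statement over all $j$. Fix the trace degrees and the proof writes itself.
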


\begin{proof}First, we see that $\vdim_n(3\Pi,1)=0$ for $n\geq 3$ by Lemma \ref{multk}.
We will prove by induction on $n$ that 
$$\vdim_{n}(3\Pi_1+\ldots+3\Pi_{n+1-j},3(n-j)+1)<0$$
for $j=0,\ldots,n-1$.\\
Explicit calculation along the lines of Example \ref{mult3example} or appealing to the procedure explained in the previous remark that
\begin{gather*}
\vdim_6(3\Pi_1+\ldots+3\Pi_7,19)=0,\quad \vdim_6(3\Pi_1+\ldots+3\Pi_6,16)=-729,\\
\vdim_6(3\Pi_1+\ldots,3\Pi_5,13)=-243,\quad\vdim_6(3\Pi_1+\ldots+3\Pi_4,10)=0,\\
 \vdim_6(3\Pi_1+3\Pi_2+3\Pi_3,7)=0,\quad \vdim_6(3\Pi_1+3\Pi_2,4)=0.
\end{gather*}
Now we fix $n$ and use Lemma \ref{casteq} three times to obtain 
\begin{multline*}
\vdim_{n}(3\Pi_1+\ldots+3\Pi_{n+1-j},3(n-j)+1)
=\vdim_n(3\Pi_1+\ldots+3\Pi_{n+1-(j+1)},3(n-(j+1))+1)\\
+3\big(\vdim_{n-1}(3\Pi_1+\ldots+3\Pi_{(n-1)+1-j},3((n-1)-j)+1\big).
\end{multline*}
The right-hand side is smaller than 0 by reverse induction on $j$ and induction on $n$.\qed
\end{proof}

\section{Application to unexpected hypersurfaces}

In this section we present our results concerning unexpected hypersurfaces. Again we assume that $\Pi_1,\ldots,\Pi_{n+1}$ are codimension 2 general linear subspaces in $\PP^n$ and $X=\Pi_1+\ldots+\Pi_{n+1}$. We consider the linear system of forms of degree $n+k$ vanishing along $X$ and its transformation by the Veneroni map to the system of forms of degree $kn+1$ vanishing along $X'=k\Pi_1+\ldots+k\Pi_{n+1}$. First, we examine the case $k=3$.

\begin{theorem}\label{unexp}
Let $X'=3\Pi_1+\ldots+3\Pi_{n+1}$, where $n\geq 3$. Then $X'$ admits an unexpected hypersurface of degree $3n+1$.
\end{theorem}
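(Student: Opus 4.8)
The plan is to pin down $\adim_n(X',3n+1)$ exactly by transporting the question back across the Veneroni map, and then to control $\vdim_n(X',3n+1)$ — cheaply for $n\geq 6$ via Theorem~\ref{mult3}, and by an explicit finite computation in the three small cases $n=3,4,5$.

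First I would invoke the pull-back computation of Section~4 with $k=3$: the Veneroni map $v_n$ identifies the system $(n+3)H-\Pi_1-\ldots-\Pi_{n+1}$ with $S'=(3n+1)H-3\Pi_1-\ldots-3\Pi_{n+1}$, and since $v_n$ is birational,
\[
\adim_n(X',3n+1)=\adim_n(\Pi_1+\ldots+\Pi_{n+1},n+3).
\]
By Corollary~\ref{mult1eq} (the case $k=3$) together with Theorem~\ref{thm1}, the right-hand side equals $\vdim_n(\Pi_1+\ldots+\Pi_{n+1},n+3)=S_{n,n+1,n+3}$, which is strictly positive. This already establishes the requirement $\adim_n(X',3n+1)>0$ in the definition of an unexpected hypersurface, and it reduces the outstanding inequality $\adim_n(X',3n+1)>\vdim_n(X',3n+1)$ to proving
\[
\vdim_n(3\Pi_1+\ldots+3\Pi_{n+1},3n+1)<S_{n,n+1,n+3}.
\]

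For $n\geq 6$ this is immediate from Theorem~\ref{mult3}, which gives $\vdim_n(3\Pi_1+\ldots+3\Pi_{n+1},3n+1)\leq 0$, while $S_{n,n+1,n+3}>0$ by the previous paragraph. For $n\in\{3,4,5\}$ I would compute both quantities by hand. On the actual-dimension side, evaluating the formula of Theorem~\ref{thm1} gives $S_{3,4,6}=56$, $S_{4,5,7}=160$, $S_{5,6,8}=432$. For the virtual dimensions one runs the recursion of Remark~\ref{rem:proc}: choosing a hyperplane $H\supseteq\Pi_m$ and applying Lemma~\ref{casteq} three times yields
\[
\vdim_n(3\Pi_1+\ldots+3\Pi_m,t)=\vdim_n(3\Pi_1+\ldots+3\Pi_{m-1},t-3)+3\,\vdim_{n-1}(3\Pi_1+\ldots+3\Pi_{m-1},t-3),
\]
which, unwound down to $\PP^2$ (where $\vdim_2(3\Pi_1+\ldots+3\Pi_m,t)=\binom{t+2}{2}-6m$) and using $\vdim_n(3\Pi,1)=0$ from Lemma~\ref{multk}, gives $\vdim_3(3\Pi_1+\ldots+3\Pi_4,10)=54<56$, $\vdim_4(3\Pi_1+\ldots+3\Pi_5,13)=135<160$, and $\vdim_5(3\Pi_1+\ldots+3\Pi_6,16)=243<432$. (The first two virtual values already appear inside Examples~\ref{mult1example} and \ref{mult3example}, and the value $160$ for $n=4$ is the actual dimension computed in Example~\ref{mult1example}, transported by $v_4$.) In each case $\adim_n(X',3n+1)>\vdim_n(X',3n+1)>0$, so $X'$ admits an unexpected hypersurface of degree $3n+1$.

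The only genuine labour is the bookkeeping in the case $n=5$, where the recursion above must be iterated through $\PP^4$, $\PP^3$ and $\PP^2$ before the base values of Lemma~\ref{multk} and Remark~\ref{rem:proc} apply; but this is a finite, elementary computation and presents no structural obstacle. Everything else is an assembly of the Veneroni pull-back from Section~4, Corollary~\ref{mult1eq}, and Theorem~\ref{mult3}.
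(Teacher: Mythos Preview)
Your proposal is correct and follows essentially the same route as the paper: transport $\adim$ across the Veneroni map and invoke Corollary~\ref{mult1eq} to get positivity, use Theorem~\ref{mult3} for $n\geq 6$, and handle $n=3,4,5$ by the explicit recursion of Remark~\ref{rem:proc}. You have in fact carried out the small-case computations in more detail than the paper does (and your values $54$, $135$, $243$ all check out); the only superfluous remark is the final ``$>0$'' on $\vdim_n(X',3n+1)$, which is true in these three cases but not required by the definition of an unexpected hypersurface.
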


\begin{proof} The linear system $S'=(3n+1)H-3\Pi_1-\ldots-3\Pi_{n+1}$ is the pullback of the system $S=(n+3)H-\Pi_1-\ldots-\Pi_{n+1}$ by the Veneroni map $v_n$. So $\adim_n(X',3n+1)=\adim_n(X,n+3)$ and by Corollary \ref{mult1eq} this value is strictly greater than 0.\par
For $n=3,4,5$ one can check that $$\adim_n(X',3n+1)=\adim_n(X,n+3)>\vdim_n(X',3n+1)$$ by straightforward calculation as presented in Example \ref{mult1example} and Example \ref{mult3example} or using implemented procedure described in Remark \ref{rem:proc}. For $n\geq 6$ by Theorem \ref{mult3} $\vdim_n(X',3n+1)\leq 0$. Hence, $X'$ admits an unexpected surface of degree $3n+1$ for all $n\geq 3$.
\end{proof}

When considering higher values of $k$, it turns out that not only unexpected hypersurfaces appear. Surprisingly, for $k=4$ the virtual dimension of a given scheme in $\PP^{21}$ exceeds the actual dimension.

\begin{example}\label{example21} Consider $Y=\Pi_1+\ldots+\Pi_{22}$ and $Y'=4\Pi_1+\ldots+4\Pi_{22}$ in $\PP^{21}$. The system of forms of degree 25 on $Y$ is transformed to the system of forms of degree 85 on $Y'$ by the Veneroni map. The results obtained using procedure from Remark \ref{rem:proc} are as follows. By Corollary \ref{mult1eq} we have
$$\adim_{21}(Y',85)=\adim_{21}(Y,25)=\vdim_{21}(Y,25)=1\,337\,982\,976,$$
whereas $$\vdim_{21}(Y',85)=12\,094\,627\,905\,536.$$
Hence, $Y'$ misses an expected hypersurface of degree 85.\\
\indent Moreover, the next values of $\adim_{21}(Y',t)$ are
\begin{gather*}
\vdim_{21}(Y',86)=-157\,230\,162\,771\,968,\\
\vdim_{21}(Y',87)=96\,757\,023\,244\,288,\\
\vdim_{21}(Y',88)=2\,366\,593\,604\,971\,209.
\end{gather*}
Hence, it seems that the virtual dimension does not provide a good prediction of the actual dimension in this case.
\end{example}

\begin{example}
We also present the analysis of the cases $k=4,5,6$ for $n=3,\ldots,50$. Using suitable procedure (in Maple or Singular) we check what is the sign of the difference $$\adim_n(X',kn+1)-\vdim_n(X',kn+1).$$
In the following cases we obtain: \\
for $k=4$
\begin{itemize}
\item unexpected hypersurfaces for $n=3,\ldots, 20$;
\item missing expected hypersurfaces for $n=21,\ldots, 50$;
\end{itemize}
for $k=5$
\begin{itemize}
	\item unexpected hypersurfaces for $n=3,\ldots, 17$, $n=42,\ldots, 50$;
	\item missing unexpected hypersurfaces for $n=18,\ldots, 41$;
\end{itemize}
for $k=6$
\begin{itemize}
	\item unexpected hypersurfaces for $n=3,\ldots, 15$, $n=37,\ldots, 50$;
	\item missing expected hypersurfaces for $n=16,\ldots, 36$.
\end{itemize}
\end{example}

\section*{Appendix}
\setcounter{section}{3}
\setcounter{theorem}{0}
\begin{lemma}
The following formula holds $$S_{n,s,t}=S_{n,s-1,t-1}+S_{n-1,s-1,t-1}.$$
\end{lemma}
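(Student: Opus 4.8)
The plan is to reduce the identity to two applications of Pascal's rule — one in the variable $s$, one in the degree $t$ — after first rewriting $S_{n,s,t}$ in a shape whose range of summation need not be tracked.

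First I would note that, with the standard conventions $\binom{a}{b}=0$ for $b<0$ and $\binom{s}{i}=0$ for integers $i>s\ge 0$, one has
$$S_{n,s,t}=\sum_{i\ge 0}(-1)^i\binom{s}{i}\binom{t+n-2i}{n-2i},$$
a finite sum, since every term with $i>N(n,s)$ already vanishes (for $i>\lfloor n/2\rfloor$ the lower entry $n-2i$ is negative; for $i>s$ the factor $\binom{s}{i}$ is $0$). Abbreviating $h_m(u):=\binom{u+m}{m}$, so that $S_{n,s,t}=\sum_{i\ge 0}(-1)^i\binom{s}{i}h_{n-2i}(t)$, I would record the elementary identity
$$h_m(u)=h_m(u-1)+h_{m-1}(u),$$
valid for every integer $m$ and every $u$: for $m\ge 1$ it is Pascal's rule, and for $m\le 0$ both sides coincide trivially ($1=1+0$ or $0=0+0$).

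Next I would prove the auxiliary recurrence $S_{n,s,t}=S_{n,s-1,t}-S_{n-2,s-1,t}$ by substituting $\binom{s}{i}=\binom{s-1}{i}+\binom{s-1}{i-1}$ term by term: the first piece is $S_{n,s-1,t}$ on the nose, and in the second piece the reindexing $i\mapsto i+1$ turns $\sum_{i\ge 0}(-1)^i\binom{s-1}{i-1}h_{n-2i}(t)$ into $-\sum_{i\ge 0}(-1)^i\binom{s-1}{i}h_{(n-2)-2i}(t)=-S_{n-2,s-1,t}$. Then I would compute the right-hand side of the claimed formula directly: grouping the two sums over the common index $i$,
$$S_{n,s-1,t-1}+S_{n-1,s-1,t-1}=\sum_{i\ge 0}(-1)^i\binom{s-1}{i}\bigl[h_{n-2i}(t-1)+h_{(n-1)-2i}(t-1)\bigr].$$
Setting $m=n-2i$ and using the displayed identity for $h$ twice — as $h_m(t-1)=h_m(t)-h_{m-1}(t)$ and as $h_{m-1}(t-1)=h_{m-1}(t)-h_{m-2}(t)$ — the bracket collapses to $h_m(t)-h_{m-2}(t)=h_{n-2i}(t)-h_{(n-2)-2i}(t)$. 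Hence the right-hand side equals $S_{n,s-1,t}-S_{n-2,s-1,t}$, which by the auxiliary recurrence is precisely $S_{n,s,t}$, finishing the proof.

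I do not expect a genuine obstacle here; the whole argument is just Pascal's rule applied with care. The only delicate point is the bookkeeping at the boundary of the summation — ensuring the convention-extended sums really agree with the truncated ones, in particular when $n$ is even (so the top index $i=n/2$ contributes $h_0=1$) and when $s$ is decreased to $s-1$ — and this is settled once and for all by fixing the conventions at the start (and tacitly assuming $s\ge 1$, the only range in which the lemma is invoked).
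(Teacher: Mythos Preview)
Your proof is correct and takes a cleaner route than the paper's. The paper works directly with the three truncated sums $\sum a_i-\sum b_i-\sum c_i$ and exhibits a telescoping identity $a_i-b_i-c_i+r_{i-1}=r_i$ with explicit remainder $r_i=(-1)^i\binom{s-1}{i}\binom{t+n-2i-2}{n-2i-2}$; it then carries out a case analysis on the parity of $n$ and on whether $s<\lfloor(n-1)/2\rfloor$, $s=\lfloor n/2\rfloor$, or $s>\lfloor n/2\rfloor$, checking in each case that the dangling boundary terms (the top $a_i$, $b_i$, $c_i$ and $r_i$) cancel. You sidestep this entirely by extending every sum over all $i\ge 0$ under the usual binomial conventions, so the ranges never need to be tracked; you then factor the argument through the auxiliary identity $S_{n,s,t}=S_{n,s-1,t}-S_{n-2,s-1,t}$ (one Pascal step in $s$) and collapse $h_m(t-1)+h_{m-1}(t-1)$ to $h_m(t)-h_{m-2}(t)$ (two Pascal steps in $t$). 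The payoff of your approach is brevity and the complete absence of the parity/threshold case split; the paper's approach has the virtue of being entirely explicit about the finite sums actually defined, at the cost of the bookkeeping you correctly flag as the only delicate point.
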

\begin{proof}
Recall that $$S_{n,s,t}=\sum_{i=0}^{N(n,s)}(-1)^i\binom{s}{i}\binom{t+n-2i}{n-2i},$$ where $N(n,s)=\min\{\lfloor n/2\rfloor, s\}$.
Denote the summands in the formula as
$$a_i=(-1)^i\bbinom{s}{i}\bbinom{t+n-2i}{n-2i},\  b_i=(-1)^i\bbinom{s-1}{i}\bbinom{t+n-2i-1}{n-2i}\textnormal{ and }c_i=(-1)^i\bbinom{s-1}{i}\bbinom{t+n-2i-2}{n-2i-1}.$$
Then we want to prove that
$$S:=S_{n,s,t}-S_{n,s-1,t-1}-S_{n-1,s-1,t-1}=\sum_{i=0}^{N(n,s)}a_i-\sum_{i=0}^{N(n,s-1)}b_i-\sum_{i=0}^{N(n-1,s-1)}c_i=0.$$
Notice that for $i=0,\ldots, N(n-2,s-1)$ we have that
\begin{equation}\label{eq}
\tag{*}
a_i-b_i-c_i+r_{i-1}=r_i,
\end{equation}
where $r_{-1}=0$ and $r_i=(-1)^i\bbinom{s-1}{i}\bbinom{t+n-2i-2}{n-2i-2}$ for $i=0,\ldots, N(n-2,s-1)$.\\
Indeed, 
\begin{gather*}
a_0-b_0-c_0=\bbinom{t+n}{n}-\bbinom{t+n-1}{n}-\bbinom{t+n-2}{n-1}=\left(\bbinom{t+n-1}{n}+\bbinom{t+n-1}{n-1}\right)-\bbinom{t+n-1}{n}-\bbinom{t+n-2}{n-1}\\
=\left(\bbinom{t+n-2}{n-1}+\bbinom{t+n-2}{n-2}\right)-\bbinom{t+n-2}{n-1}=\bbinom{t+n-2}{n-2}=r_0
\end{gather*}
and
\begin{align*}
&a_i-b_i-c_i+r_{i-1}=(-1)^i\left[\bbinom{s}{i}\bbinom{t+n-2i}{n-2i}-\bbinom{s-1}{i}\bbinom{t+n-2i-1}{n-2i}-\bbinom{s-1}{i}\bbinom{t+n-2i-2}{n-2i-1}-\bbinom{s-1}{i-1}\bbinom{t+n-2i}{n-2i}\right]\\
&=(-1)^i\left[\left(\bbinom{s-1}{i}+\bbinom{s-1}{i-1}\right)\bbinom{t+n-2i}{n-2i}-\bbinom{s-1}{i}\bbinom{t+n-2i-1}{n-2i}-\bbinom{s-1}{i}\bbinom{t+n-2i-2}{n-2i-1}-\bbinom{s-1}{i-1}\bbinom{t+n-2i}{n-2i}\right]\\
&=(-1)^i\bbinom{s-1}{i}\left[\bbinom{t+n-2i}{n-2i}-\bbinom{t+n-2i-1}{n-2i}-\bbinom{t+n-2i-2}{n-2i-1}\right]\\
&=(-1)^i\bbinom{s-1}{i}\left[\left(\bbinom{t+n-2i-1}{n-2i}+\bbinom{t+n-2i-1}{n-2i-1}\right)-\bbinom{t+n-2i-1}{n-2i}-\bbinom{t+n-2i-2}{n-2i-1}\right]\\
&=(-1)^i\bbinom{s-1}{i}\left[\left(\bbinom{t+n-2i-2}{n-2i-1}+\bbinom{t+n-2i-2}{n-2i-2}\right)-\bbinom{t+n-2i-2}{n-2i-1}\right]=(-1)^i\bbinom{s-1}{i}\bbinom{t+n-2i-2}{n-2i-2}=r_i
\end{align*}
for $i=1,\ldots, N(n-1,s-1)$.\par

The equalities (\ref{eq}) allow us to simplify the formula for $S$ since we have that
\begin{equation}\label{eq2}
\tag{**}
\sum_{i=0}^{m}a_i-\sum_{i=0}^{m}b_i-\sum_{i=0}^{m}c_i=r_m
\end{equation}
for $m=0,\ldots, N(n-2,s-1)$.

Now we consider the following cases.\\
I. First, we assume that $s<\lfloor \frac{n-1}{2}\rfloor$. Then $N(n,s)=s$ and $N(n,s-1)=N(n-1,s-1)=N(n-2,s-1)=s-1$. Using (\ref{eq2}) we have that $S=0$, since
$$S=\sum_{i=0}^{s}a_i-\sum_{i=0}^{s-1}b_i-\sum_{i=0}^{s-1}c_i=a_s+r_{s-1}=(-1)^s\bbinom{t+n-2s}{n-2s}+(-1)^{s-1}\bbinom{t+n-2s}{n-2s}=0.$$
II. Now we assume that $s\geq\lfloor \frac{n-1}{2}\rfloor$ and $n=2l+1$ for some $l\in\NN$. Then $\lfloor \frac{n-1}{2}\rfloor=\lfloor \frac{n}{2}\rfloor=l$ and $\lfloor \frac{n-2}{2}\rfloor=l-1$. \par

If $s=l$, then $N(n,s)=s$ and $N(n,s-1)=N(n-1,s-1)=N(n-2,s-1)=s-1$. We get that $S=0$ by (\ref{eq2}) and the fact that $a_s+r_{s-1}=0$. \par

If $s>l$, then $N(n,s)=N(n,s-1)=N(n-1,s-1)=l$ and $N(n-2,s-1)=l-1$. We have by (\ref{eq2}) that
\begin{gather*}
S=a_l-b_l-c_l+r_{l-1}=(-1)^l\left[\bbinom{s}{l}\bbinom{t+1}{1}-\bbinom{s-1}{l}\bbinom{t}{1}-\bbinom{s-1}{l}\bbinom{t-1}{0}-\bbinom{s-1}{l-1}\bbinom{t+1}{1}\right]\\
=(-1)^l\left[\left(\bbinom{s-1}{l}+\bbinom{s-1}{l-1}\right)(t+1)-\bbinom{s-1}{l}t-\bbinom{s-1}{l}-\bbinom{s-1}{l-1}(t+1)\right]=0.
\end{gather*}
III. Finally, we take $s\geq\lfloor \frac{n-1}{2}\rfloor$ and $n=2l$ for some $l\in\NN$. Then $\lfloor \frac{n}{2}\rfloor=l$ and $\lfloor \frac{n-1}{2}\rfloor=\lfloor \frac{n-2}{2}\rfloor=l-1$.\par

If $s=l-1$ or $s=l$, then $N(n,s)=s$ and $N(n,s-1)=N(n-1,s-1)=N(n-2,s-1)=s-1$ and again, we have that $S=0$ using the equality $a_s+r_{s-1}=0$. \par

If $s>l$, then $N(n,s)=N(n,s-1)=l$ and $N(n-1,s-1)=N(n-2,s-1)=l-1$. We have that
$$S=a_l-b_l+r_{l-1}=(-1)^l\left[\bbinom{s}{l}\bbinom{t}{0}-\bbinom{s-1}{l}\bbinom{t-1}{0}-\bbinom{s-1}{l-1}\bbinom{t}{0}\right]=(-1)^l\left[\bbinom{s}{l}-\bbinom{s-1}{l}-\bbinom{s-1}{l-1}\right]=0.$$
\indent That gives all possible cases and proves the formula.
\end{proof}

\section*{Acknowledgement}
I would like to thank my advisor Marcin Dumnicki for helpful remarks and suggestions how to improve the paper.

\end{document}